\newif\ifpictures
\numberwithin{equation}{section}
\newtheorem{thm}{Theorem}
\newtheorem{lemma}[thm]{Lemma}
\newtheorem{cor}[thm]{Corollary}
\theoremstyle{definition}
\newtheorem{example}[thm]{Example}
\newtheorem{remark1}[thm]{Remark}
\newtheorem{openproblem1}[thm]{Open problem}
\newtheorem{definition}[thm]{Definition}
\newenvironment{rem}{\begin{remark1}\rm}{\end{remark1}}
\numberwithin{thm}{section}
\newcounter{FNC}[page]
\def\newfootnote#1{{\addtocounter{FNC}{2}$^\fnsymbol{FNC}$%
     \let\thefootnote\relax\footnotetext{$^\fnsymbol{FNC}$#1}}}
\newcommand{\N}{\mathbb{N}}
\newcommand{\R}{\mathbb{R}}
\newcommand{\cX}{\mathcal{X}}
\newcommand{\cY}{\mathcal{Y}}
\DeclareMathOperator{\diag}{diag}
\DeclareMathOperator{\tr}{tr}
\DeclareMathOperator{\myspan}{span}
\DeclareMathOperator{\optstrat}{\mathcal{O}}
\newcommand{\sym}{\mathcal{S}}
\newcommand{\psd}{\mathcal{S}^{+}}
\title[Semidefinite games]{Semidefinite games}
\author{Constantin Ickstadt}
\author{Thorsten Theobald}
\address{Constantin Ickstadt, Thorsten Theobald:
        Goethe-Universit\"at, FB 12 -- Institut f\"ur Mathematik,
        Postfach 11 19 32, 60054 Frankfurt am Main, Germany}
\author{Elias Tsigaridas}
\address{Elias Tsigaridas: Sorbonne Universit\'e, Paris University, CNRS, and Inria Paris.
IMJ-PRG,  4 place Jussieu,
75252 Paris Cedex 05, France}
\date{\today}
\begin{document}

\begin{abstract}
We introduce and study the class of semidefinite games, which generalizes
bimatrix games and finite $N$-person games, 
by replacing the simplex of the mixed strategies for 
each player by a slice of the positive semidefinite cone
in the space of real symmetric matrices.

For semidefinite two-player zero-sum games, we show that the 
optimal strategies can be computed by semidefinite programming. 
Furthermore, we show that two-player
semidefinite zero-sum games are almost equivalent to semidefinite programming,
generalizing Dantzig's result on the almost equivalence of bimatrix
games and linear programming.

For general two-player semidefinite games, we prove a spectrahedral
characterization of the Nash equilibria. Moreover, we give constructions
of semidefinite games with many Nash equilibria. In particular,
we give a construction of semidefinite
games whose number of connected components
of Nash equilibria exceeds the long standing
best known construction for many Nash
equilibria in bimatrix games, which was presented 
by von Stengel in 1999.
\end{abstract}

\maketitle

\section{Introduction}
In the fundamental model of a bimatrix game in game theory,
the spaces of the mixed strategies are given by (two) simplices
\[
  \Delta_1 \ = \ \{ x \in \R^m \, : \, x \ge 0 \text{ and } \sum_{i=1}^m x_i = 1 \}, \quad
  \Delta_2 \ = \ \{ y \in \R^n \, : \, y \ge 0 \text{ and }
    \sum_{j=1}^n y_j = 1 \} .
\]
The payoffs of the two players, $p_A$ and $p_B$,
are given by two 
matrices $A, B \in \R^{m \times n}$, that is,
\[
  p_A(x,y) \ = \ \sum_{i,j} x_i A_{ij} y_j \; \text{ and } \; 
  p_B(x,y) \ = \ \sum_{i,j} x_i B_{ij} y_j.
\]
In the zero-sum case $B=-A$, optimal strategies do exist and can be 
characterized by linear programming. Moreover, by
Dantzig's classical result \cite{dantzig-1951-equivalence},
zero-sum matrix games and linear programming are almost equivalent,
see Adler \cite{adler-2013} and von Stengel \cite{von-stengel-2022}
for a detailed treatment of the situation
when Dantzig's reduction is not applicable. Bimatrix games can be
seen as a special case of broader classes of games (such as 
convex games, see \cite{ggf-2010},
separable games, see \cite{sop-2008}), 
which -- with increasing generality -- are less
accessible
from the combinatorial and computational viewpoint.

We introduce and study a natural semidefinite generalization
of bimatrix games (and of finite $N$-person games),
in which the strategy spaces are not simplices
but slices of the positive
semidefinite cone; that is
\begin{eqnarray*}
  \mathcal{X} & = & \{ X \in \sym_m \ : \  X \succeq 0 \text{ and } \tr(X) = 1\} \\
  \text {and } \, \mathcal{Y} & = & \{ Y \in \sym_n \ : \ Y \succeq 0 \text{ and } \tr(Y) = 1\} \, ,
\end{eqnarray*}
where $\sym_m$ denotes the set of real symmetric $m \times m$-matrices,
``$\succeq 0$'' denotes the positive definiteness of a matrix and $\tr$
abbreviates the trace.
The payoff functions are
\[
  p_A(X, Y) \ = \ \sum_{i,j,k,l} X_{ij} A_{ijkl} Y_{kl} \; \text{ and } \;
  p_B(X, Y) \ = \ \sum_{i,j,k,l} X_{ij} B_{ijkl} Y_{kl},
\]
where $A$ and $B$ are tensors in the bisymmetric space
$\sym_m \times \sym_n$. That is, $A$ satisfies the symmetry relations
$A_{ijkl} = A_{jikl}$ and $A_{ijkl} = A_{ijlk}$;
analogous symmetry relations hold for $B$.
If $X$ and $Y$ are restricted to be diagonal matrices, the semidefinite
games specialize to bimatrix games. Similarly, if $A_{ijkl} = B_{ijkl} = 0$
whenever $i \neq j$ or $k \neq l$, then the off-diagonal entries of $X$
and $Y$ do not have an influence on the payoffs
and the game is a special case of a bimatrix game.
\smallskip

The motivation for the model of semidefinite games 
comes from several origins.
\smallskip

\noindent
(1) The Nash equilibria of bimatrix games are intrinsically connected
to the combinatorics of polyhedra. Prominently, 
von Stengel \cite{von-stengel-1999} used
this connection and cyclic polytopes to construct a family of 
$n \times n$-bimatrix games whose number of equilibria grows as 
$0.949 \cdot (1+\sqrt{2})^n/\sqrt{n}$, for $n \to \infty$. 
In particular, this number grows faster
than $2^n-1$, which was an earlier conjecture of Quint and 
Shubik \cite{QuiShy-conj-97} for an 
upper bound. As of today, it is still an open problem whether there
are bimatrix games with even more Nash equilibria than in von 
Stengel's construction.
Recently, for tropical bimatrix games, Allamigeon, Gaubert and Meunier
\cite{agm-tc-ne-20} showed that the upper bound of $2^n-1$ equilibria,
i.e., the Quint-Shubik bound, holds in the tropical setting.

In many subareas around optimization and geometry, the transition 
from linear-polyhe\-dral settings to semidefinite settings has turned out
to be fruitful and beneficial. In this transition, polyhedra
(the feasible sets of linear programs) are carried
over into the more general spectrahedra
(the feasible sets of semidefinite programs),
see, e.g.,~\cite{bpt-2013}. 
One of our main goals is to relate the Nash equilibria of
semidefinite games to the geometry and combinatorics of spectrahedra.
\smallskip

\noindent
(2) Various approaches to quantum games have been investigated,
which combine game theoretic models with features of quantum 
computation and quantum information theory
(see \cite{berta2016quantum,guo-2008,ksb-2018,landsburg2011nash,
meyer-1999,sikora2017linear}). Quantum
states are given by positive semidefinite Hermitian matrices with unit trace
(see, e.g., \cite{nielsen-chuang-2002} or 
\cite{psv-2018} for an optimization
 viewpoint). 
  An essential characteristic of our model 
 is the use of positive
 semidefinite real-symmetric matrices with unit trace
 (also known as \emph{spectraplex}) as mixed strategies.
From this perspective, we can consider the semidefinite games
 as a real-quantum generalization of 
bimatrix games and of finite $N$-player games.

Our class of games can also be seen as a subclass of the interactive
quantum games studied in \cite{GutWat-qgc-07}, see 
also \cite{BW-qaNE-21}.
These games involve two players and a referee and, possible many, interactions between them. The overall actions of each player, 
the so-called Choi representation, consist of a single Hermitian 
positive semidefinite matrix along with a finite number of
linear constraints
and for the zero-sum case they derive a minimax theorem over the complex numbers. 
We refer the reader to \cite{BW-qaNE-21} for further details and complexity results
and to \cite{jain2009parallel}
for an algorithm to compute the equilibrium in the one round 
zero-sum case. 
\smallskip

\noindent
(3) In recent times, the connection of games and the use of polynomials in
  optimization has received wide interest. Prominently, 
  Stein, Ozdaglar and Parrilo \cite{parrilo-2006,sop-2008,
  stein-2011-correlated} have developed sum of 
  squares-based optimization solvers for game theory.
  Laraki and Lasserre have developed hierarchical moment 
  relaxations \cite{laraki-lasserre-2012},
  see also Ahmadi and Zhang \cite{ahmadi-zhang-2021} 
  for semidefinite relaxations and the Lasserre hierarchy
  to approximate Nash equilibria in bimatrix games.
  Recently, Nie and Tang \cite{nie-tang-2020,nie-tang-2021-generalized-nash}
  have studied games with polynomial
  descriptions and convex generalized
  Nash equilibrium problems through polynomial optimization and
  moment-SOS relaxations.
  The semidefinite conditions correspond to a nice polynomial structure, 
  with underlying convexity. 
   
   In a different direction, the geometry of Nash equilibria in our class of
   games establishes novel connections and questions between 
   game theory and semialgebraic geometry. Here, recall that already
   the set of Nash equilibria of finite $N$-person games can be 
   as complicated as arbitrary semialgebraic 
   sets \cite{datta-2003}.
   See \cite{portakal-sturmfels-2022} for recent work on the geometry 
   of dependency  equilibria.
\smallskip

\noindent
{\bf Our contributions.}
1. We develop a framework for approaching semidefinite games
  through the duality theory of semidefinite programming.
  As a consequence, the optimal strategies in semidefinite zero-sum games
  can be computed by a semidefinite program.
  Moreover, the set of optimal strategies are spectrahedra
  (rather than only projections of spectrahedra).
  See Theorem~\ref{th:zero-sum-spectrahedron}.
\smallskip

\noindent
2. We generalize Dantzig's result on the almost equivalence of 
  zero-sum bimatrix games and linear programs to 
  the almost equivalence of semidefinite zero-sum
  games and semidefinite programs. See 
  Theorem~\ref{th:equivalence1}. For the special case of
  semidefinite programs with diagonal matrices, our result 
  recovers Dantzig's result.
\smallskip

\noindent
3. For general (i.e., not necessarily zero-sum) semidefinite games, 
  we prove a spectrahedral characterization of Nash equilibria.
  This characterization generalizes the polyhedral characterizations
  of Nash equilibria in bimatrix games.
  See Theorem~\ref{th:nash-charact1}.
\smallskip

\noindent
4. We give constructions of families of semidefinite games with many
  Nash equilibria. In particular, these constructions of games on the
  strategy space $\sym_n \times \sym_n$ have more connected
  components of Nash equilibria than the best known constructions
  of Nash equilibria in bimatrix games (due to von 
  Stengel \cite{von-stengel-1999}).
  See Example~\ref{ex:many-nash1}.
\smallskip

The paper is structured as follows. After collecting some notation in
Section~\ref{se:notation}, we introduce semidefinite
games in Section~\ref{se:semidefinite-games} and view them 
within the more general class of separable games.
Section~\ref{se:zero-sum}
deals with computing the optimal strategies in semidefinite
zero-sum games by semidefinite programming. Section~\ref{se:equivalence}
then proves the almost equivalence of zero-sum games and semidefinite
programs. For general semidefinite games,
Section~\ref{se:general-semidefinite-games} gives
a spectrahedral characterization of the Nash equilibria. In
Section~\ref{se:many-nash}, we present constructions with
many Nash equilibria. Section~\ref{se:outlook} concludes the
paper.

\section{Notation\label{se:notation}}
We denote by $\sym_n$  the set of real symmetric $n \times n$-matrices
and by $\sym_n^+$ the subset of matrices in $\sym_n$ which are positive
semidefinite. Further, denote by
$\langle \cdot, \cdot \rangle$ the Frobenius scalar product,
$\langle A, B \rangle := \sum_{i,j} a_{ij} b_ {ij}$.
$I_n$ denotes the identity matrix.

An optimization problem of the form
\begin{equation}
  \label{eq:sdp-form1}
  \inf_{X \in \sym_n} \left\{ \langle C, X \rangle \, : \ \langle A_i, X \rangle = b_i, \, 1 \le i \le m, \, X \succeq 0 \right\}
\end{equation}
with $A_1, \ldots, A_m \in \sym_n$, $C \in \sym_n$ and $b \in \R^m$
is called \emph{semidefinite program (SDP) in primal normal form}, 
and a problem of the form 
\begin{equation}
  \label{eq:sdp-form2}
  \sup_{Z \in \sym_n, \, y \in \R^m} \{ b^T y\, : \, \sum_{i=1}^m y_i A_i + Z = C, \, Z \succeq 0\}
\end{equation}
is called an \emph{SDP in dual normal form.} We will make frequent use
of the following duality results of semidefinite programming, 
see, e.g., \cite{vandenberghe-boyd-survey}.

\begin{thm} (a) \emph{(Weak duality.)}
Let $X$ and $(Z,y)$ be feasible points for~\eqref{eq:sdp-form1}
and~\eqref{eq:sdp-form2}. Then
$
  \langle C,X \rangle - b^T y \ \ge 0 \, .
$
\smallskip

(b) \emph{(Strong duality.)}
If both \eqref{eq:sdp-form1} and~\eqref{eq:sdp-form2} are
strictly feasible with finite optimal values, then the optimal values
coincide and they are attained in both problems.
\end{thm}

A convex set $C \subset \R^k$ is called a \emph{spectrahedron} if it can be written
in the form
\begin{equation}
  \label{eq:spectrahedron1}
  C = \Big\{ x \in \R^k \ : \ A_0 + \sum_{i=1}^k x_i A_i \succeq 0 \Big\},
\end{equation}
with $A_0, \ldots,A_k \in \sym_n$ for some $n \in \N$. Any representation
of $C$ of the form~\eqref{eq:spectrahedron1} is called an
\emph{LMI (Linear Matrix Inequality) representation} of $C$.

A spectrahedron in $\R^k$ can also be described as the intersection of the
cone $\sym_n^+$ with an affine subspace
$
  U = A_0 + L ,
$
where $A_0 \in \sym_n$ and 
$L$ is a linear subspace of $\sym_n$ of
dimension $k$, say, given as $L = \myspan\{A_1, \ldots, A_k\}$ (see, e.g., \cite{rostalski-sturmfels-2010}, \cite[Chapter~5]{bpt-2013}).
The sets of the form
\begin{equation}
  \label{eq:spectrahedral-shadow}
  C = \Big\{ x \in \R^k \ : \ \exists y \in \R^l \; \:
      A_0 + \sum_{i=1}^k x_i A_i + \sum_{j=1}^l y_j B_j \succeq 0 \Big\},
\end{equation}
with symmetric matrices $A_i, B_j$,
are called \emph{spectrahedral shadows} 
(see \cite{scheiderer-spectrahedral-shadows}). Any representation
of the form~\eqref{eq:spectrahedral-shadow} is called
a \emph{semidefinite representation of $C$.}

\section{Semidefinite games\label{se:semidefinite-games}}

\subsection{Two-player and \texorpdfstring{$N$}{N}-player 
  semidefinite games}
  % in formulas in sections, texorpdfstring has to be used to avoid a warning
Most of our work is concerned with two-player semidefinite games.
For simplicity, we work over the real numbers, while many considerations
can also be carried over to the complex numbers.
Let $m, n \ge 1$ and the strategy spaces $\cX$ and $\cY$ are
\begin{eqnarray*}
  \mathcal{X} & = & \{ X \in \sym_m \ : \  X \succeq 0 \text{ and } \tr(X) = 1\} \\
  \text {and } \mathcal{Y} & = & \{ Y \in \sym_n \ : \ Y \succeq 0 \text{ and } \tr(Y) = 1\} \, .
\end{eqnarray*}

To formulate the payoffs, it is convenient to denote by
$(A_{\cdot\cdot kl})_{1 \le k,l \le n}$ the symmetric 
$n \times n$-matrix which results from a fourth-order tensor
$A$ by fixing the third index to $k$ and the fourth index to $l$.
Such two-dimensional sections of a tensor are also 
called \emph{slices}.
The payoff functions are
\begin{eqnarray*}
  p_A(X, Y) \ = \ \sum_{i,j,k,l} X_{ij} A_{ijkl} Y_{kl} 
       \  = \ \langle ( \langle X, A_{\cdot \cdot kl} \rangle)_{1 \le k,l \le n}, Y
            \rangle \\
  \text{ and } \;
  p_B(X, Y) \ = \ \sum_{i,j,k,l} X_{ij} B_{ijkl} Y_{kl} 
        \ = \ \langle ( \langle X, B_{\cdot \cdot kl} \rangle)_{1 \le k,l \le n}, Y
            \rangle ,
\end{eqnarray*}
where $A$ and $B$ are tensors in the bisymmetric space
$\sym_m \times \sym_n$. That is, $A$ satisfies the symmetry relations
$A_{ijkl} = A_{jikl}$ and $A_{ijkl} = A_{ijlk}$
and analogous symmetry relations hold for $B$.
If $A = -B$, then the game is called a \emph{semidefinite zero-sum game}.

For the $N$-player version, 
with strategy spaces
\[
  \mathcal{X}^{(i)} = \{X \in \sym_{m_i} \, : \, X \succeq 0
  \text{ and } \tr(X) = 1\},
  \quad \text{for } 1 \leq i \leq N,
\]
let $A^{(1)}, \ldots, A^{(N)} \in \sym_{m_1} \times \cdots \times 
  \sym_{m_N}$.
If $X=(X^{(1)}, \ldots, X^{(N)})$, then the
payoff function for the $k$-th player is
\begin{eqnarray*}
  p_k(X^{(1)}, \ldots, X^{(N)}) 
      & = &
            \sum_{i_1,j_1=1}^{m_1} \cdots \sum_{i_N,j_N=1}^{m_N}
            A^{(k)}_{(i_1,j_1), \ldots, (i_N,j_N)} X^{(1)}_{(i_1,j_1)} \cdots 
            X^{(N)}_{(i_N,j_N)} \, .
\end{eqnarray*}

\subsection{Separable games}

Stein, Ozdaglar and Parrilo \cite{sop-2008} have introduced the 
class of separable games.
An $N$-player \emph{separable game} consists of pure
strategy sets $C_1, \ldots, C_N$,
which are non-empty
compact metric spaces, and the payoff 
functions $p_k : C \to \R$. The latter are of the form
\[
  p_k(s) \ = \ \sum_{j_1=1}^{m_1} \cdots \sum_{j_N=1}^{m_N}
    a_k^{j_1 \cdots j_N} f_1^{j_1}(s_1) \cdots f_N^{j_N}(s_N),
\]
where $C := \prod_{k=1}^N C_k$,
$a_k^{j_1 \cdots j_N} \in \R$, the functions
$f_k^{j_{i}} : C_k \to \R$ are continuous,
and $i, k \in \{1, \ldots, N\}$.

Semidefinite games are special cases of separable games.
We can see this relation from two viewpoints.
From a first viewpoint, let $C_k$ be the matrices in 
$\sym_{m_k}^+$ with trace~1
and set
\begin{eqnarray*}
  f_t^{(r,s)}(X^{(t)}) & = & X^{(t)}_{rs}, \\
  a_k^{(i_1,j_1),\ldots,(i_N,j_N)} & = & (A^{(k)})_{(i_1,j_1),\ldots,(i_N,j_N)}.
\end{eqnarray*}
Then, the payoff functions become
\begin{eqnarray*}
    p_k(X^{(1)}, \ldots, X^{(N)}) & = & 
      \sum_{i_1,j_1=1}^{m_1} \cdots \sum_{i_N,j_N=1}^{m_N}
      a_k^{(i_1,j_1), \ldots, (i_N,j_N)} f_1^{(i_1,j_1)}(X^{(1)}) \cdots f_n^{(i_N,j_N)}(X^{(N)}) \\
      & = &
            \sum_{i_1,j_1=1}^{m_1} \cdots \sum_{i_N,j_N=1}^{m_N}
            A^{(k)}_{(i_1,j_1), \ldots, (i_N,j_N)} X^{(1)}_{(i_1,j_1)} \cdots 
            X^{(N)}_{(i_N,j_N)}.
\end{eqnarray*}
This yields the setup of semidefinite games as introduced before.

The set of mixed strategies $\Delta_k$ of the $k$-th player
is defined as the space of Borel probability measures $\sigma_k$
over $C_k$. 
A mixed strategy profile $\sigma$ is a \emph{Nash equilibrium}
if it satisfies
\[
  p_k(\tau_k, \sigma_{-k}) \le p_k(\sigma),
  \quad \text{for all $\tau_k \in \Delta_k$ and
  $k \in \{1, \ldots, N\}$},
\]
where $\sigma_{-k}$ denotes the mixed strategies of all players 
except player $k$.

In this setting, the relation of our model to the mixed strategies
of separable games does not yield any new insight,
since taking the Borel measures over the convex set
$C_k$ does not give new strategies.

There is a second viewpoint, which better captures the role of the
pure strategies. Since every point in the
positive semidefinite cone
is a convex combination of positive semidefinite rank-1 matrices,
we can also define the set of pure strategies $C_k$ as 
the set of matrices in $\sym_{m_k}^+$ which have trace~1 and rank~1.
Then, 
by a Carath\'{e}odory-type argument in \cite[Corollary 2.10]{sop-2008},
every separable game has a Nash equilibrium
in which player $k$ mixes among at most 
$\dim \sym_{m_k}+1  = \binom{m_k+1}{2}+1$ pure
strategies.
In contrast to finite $N$-player games, the decomposition of a mixed strategy
(such as the one in a Nash equilibrium) in terms of the pure strategies is not unique.
Example~\ref{ex:5nasheqb} will illustrate this.

\section{Semidefinite zero-sum games\label{se:zero-sum}}

In this section, we consider semidefinite zero-sum games.
The payoff tensors are given by 
$A$ and $B:=-A$. Hence, the second player wants to minimize the
payoff of the first player, $p_A$. By the classical minimax theorem for bilinear functions 
over compact convex sets
\cite{dks-1950,von-neumann-1945}
(see also \cite{dresher-karlin-1953}), 
optimal strategies exist in the zero-sum case. We show that the
sets of optimal strategies are spectrahedra and reveal the semialgebraic
geometry of semidefinite zero-sum games.

\begin{thm} \label{th:zero-sum-spectrahedron}
Let $G=(A,B)$ be a semidefinite zero-sum game. 
Then, the set of optimal strategies of each player
is the set of optimal solutions of a semidefinite program.
Moreover, each set of optimal strategies is a spectrahedron.
\end{thm}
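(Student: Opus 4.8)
The plan is to mimic the classical linear-programming argument for zero-sum bimatrix games, replacing LP duality by SDP duality. Fix the semidefinite zero-sum game $G = (A, -A)$. For a fixed strategy $Y \in \cY$ of the second player, the first player's payoff is $p_A(X,Y) = \langle X, M(Y)\rangle$, where $M(Y) := (\langle A_{\cdot\cdot kl}, Y\rangle \cdot \text{(appropriate reassembly)})$ is the symmetric $m \times m$-matrix obtained by contracting $A$ against $Y$ in its last two indices; by the bisymmetry of $A$ this $M(Y)$ is indeed symmetric and depends linearly on $Y$. First I would observe that for a fixed $Y$ the best response value of player~1 is $\max\{\langle M(Y), X\rangle : X \succeq 0,\ \tr X = 1\}$, which equals $\lambda_{\max}(M(Y))$. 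Hence the value of the game is $v = \min_{Y \in \cY} \lambda_{\max}(M(Y))$, and $\lambda_{\max}(M(Y)) \le t$ is equivalent to the LMI $tI_m - M(Y) \succeq 0$. Combining this with the constraints $Y \succeq 0$, $\tr Y = 1$ gives the optimization problem
\[
  v \ = \ \min_{t \in \R,\, Y \in \sym_n} \ \{\, t \ : \ t I_m - M(Y) \succeq 0,\ Y \succeq 0,\ \tr Y = 1 \,\},
\]
which is a semidefinite program in the variables $(t, Y)$ (after writing the two PSD blocks as a single block-diagonal matrix and absorbing $\tr Y = 1$ as an affine constraint). Its set of optimal solutions projects onto the set of optimal strategies of player~2; to see that this projection is itself a spectrahedron and not merely a spectrahedral shadow, note that once we fix $t = v$ the remaining feasible set in the variable $Y$ alone is exactly $\{Y : vI_m - M(Y) \succeq 0,\ Y \succeq 0,\ \tr Y = 1\}$, an intersection of an LMI-defined set with an affine subspace, hence a spectrahedron.

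Next I would run the symmetric construction for player~1: contracting $A$ against $X$ in its first two indices yields a symmetric $n \times n$-matrix $N(X)$ with $p_A(X,Y) = \langle N(X), Y\rangle$, and player~2's best-response value against $X$ is $\min_{Y \in \cY}\langle N(X), Y\rangle = \lambda_{\min}(N(X))$. So $v = \max_{X \in \cX} \lambda_{\min}(N(X))$, giving the SDP
\[
  v \ = \ \max_{s \in \R,\, X \in \sym_m} \ \{\, s \ : \ N(X) - s I_n \succeq 0,\ X \succeq 0,\ \tr X = 1 \,\},
\]
whose optimal solutions project onto the optimal strategies of player~1, and again fixing $s = v$ exhibits that set as a spectrahedron. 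The equality of the two values of $v$ (so that "the value of the game" is well defined and the two SDPs have a common optimum) is exactly the minimax theorem for bilinear functions over compact convex sets cited just before the statement; alternatively one can check that the two SDPs above are in fact SDP-duals of each other and invoke strong duality, the Slater condition being satisfied since $\tfrac1m I_m$ and $\tfrac1n I_n$ are strictly feasible interior points of $\cX$ and $\cY$.

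The one point requiring care — and the step I expect to be the main obstacle — is the passage from "set of optimal strategies = projection of the SDP's optimal face" to "set of optimal strategies = spectrahedron." A projection of a spectrahedron is in general only a spectrahedral shadow, so the argument must use the special structure: the extra variable that gets projected away, namely the scalar $t$ (resp. $s$), is pinned to the constant value $v$ on the entire optimal face, so the projection is trivial and the image is cut out directly by an LMI together with the affine constraints $Y \succeq 0$, $\tr Y = 1$. I would state this carefully, perhaps as a small lemma: if $v$ is the optimal value then
\[
  \{\, Y \in \cY \ : \ Y \text{ optimal} \,\} \ = \ \{\, Y \in \sym_n \ : \ v I_m - M(Y) \succeq 0,\ Y \succeq 0,\ \tr Y = 1 \,\},
\]
where $\subseteq$ holds because optimality of $Y$ forces $\lambda_{\max}(M(Y)) \le v$ (and $\ge v$ always), hence the LMI; and $\supseteq$ holds because any feasible $Y$ for the right-hand side has best-response value $\le v$, hence is optimal. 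This makes the set manifestly a spectrahedron, and the mirror statement handles player~1.
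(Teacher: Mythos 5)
Your proposal is correct and follows essentially the same route as the paper: reduce each player's inner best-response problem to an eigenvalue bound (equivalently, to SDP duality with a slack matrix), obtain a joint SDP in the strategy and a scalar, and observe that since the scalar is pinned to the game value on the optimal face, eliminating it leaves an LMI description of the optimal strategy set, hence a spectrahedron. The only cosmetic difference is that you invoke the minimax theorem (or a duality check) for the equality of the two players' values, whereas the paper establishes this by explicitly verifying that the two SDPs form a primal--dual pair.
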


The \emph{value} $V$ of the game is defined through the minimax relation
\[
  \max_{X \in \cX} \min_{Y \in \cY} 
    \sum_{i,j,k,l} X_{ij} A_{ijkl} Y_{kl} \ = \ V
    \ = \ \min_{Y \in \cY} \max_{X \in \cX}
    \sum_{i,j,k,l} X_{ij} A_{ijkl} Y_{kl}.
\]

The following lemma records that zero-sum
matrix games can be embedded into semidefinite
zero-sum games.

\begin{lemma}
\label{le:embed}
For a given zero-sum matrix game $G$ with payoff matrix 
$A = (a_{ij}) \in \R^{m \times n}$,
let $G'$ be the semidefinite zero-sum game on $\sym_m \times \sym_n$-matrices and payoff tensor
\[
\begin{array}{rcl@{\quad}l}
  A_{iikk} & = & a_{ik} & \text{for } 1 \le i \le m, \; 1 \le k \le n, \\
  A_{ijkl} & = & 0         & \text{for } i \neq j \text{ or } k \neq l.
\end{array}
\]
Then a pair $(x,y) \in \Delta_1 \times \Delta_2$ is a pair of optimal strategies for $G$
if and only if there exists a pair of optimal strategies $X \times Y$ for $G'$ with
\begin{equation}
  \label{eq:diagonal1}
  X_{ii} = x_{i}, \; 1 \le i \le m,  \; \text{ and } \;
  Y_{kk} = y_{k}, \; 1 \le k \le n.
\end{equation}
\end{lemma}

\begin{proof}
For any strategy pair $(X,Y) \in \cX \times \cY$ with \eqref{eq:diagonal1},
the payoff in $G'$ is
\[
  \sum_{i,j,k,l} X_{ij} A_{ijkl} Y_{kl} \ = \ \sum_{i,k} X_{ii} A_{iikk} Y_{kk}
  \ = \ \sum_{i,k} x_i a_{ik} y_k,
\] 
which coincides with the payoff in $G$ for the strategy pair $(x,y)$.
\end{proof}

As a consequence of Lemma~\ref{le:embed}, any oracle to solve
semidefinite zero-sum games can be used to solve zero-sum matrix 
games. Namely, construct the semidefinite zero-sum game 
$G'$ described in Lemma~\ref{le:embed} and let $X^* \in \sym_m^+$ 
and $Y^* \in \sym_n^+$ be the optimal strategies provided by the oracle.
Let $x^*$ and $y^*$ be the vectors of diagonal elements of
$X^*$ and $Y^*$. Since $X^*$ and $Y^*$ are positive semidefinite,
the vectors $x^*$ and $y^*$ are nonnegative and due to
$\tr(X^*) = \tr(Y^*) = 1$ we have 
$\sum_{i=1}^m x_i^*=\sum_{j=1}^n y_i^* = 1$.
Since $A_{ijkl} = 0$ for $i \neq j$ or $k \neq l$, the off-diagonal
elements in any strategy of the semidefinite game $G'$ do not 
matter for the payoffs. 
Hence, $x^*$ and $y^*$ are optimal strategies
for the zero-sum matrix game.

\begin{lemma}
\label{le:antisymm}
Let $G$ be a semidefinite zero-sum game on $\sym_n \times \sym_n$. 
If the payoff tensor satisfies 
\[
  A_{ijkl} = - A_{klij} \quad \text{for all } i,j,k,l,
\]
then $G$ has value~0.
\end{lemma}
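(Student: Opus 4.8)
The plan is to exploit the antisymmetry $A_{ijkl} = -A_{klij}$ to show that the payoff function $p_A$ is itself antisymmetric under swapping the two players, in the sense that $p_A(X,Y) = -p_A(Y,X)$ whenever $X$ and $Y$ live in the same space (i.e.\ $m=n$; note the hypothesis forces $m=n$, since otherwise the index relation is vacuous and the matrices are not comparable). First I would verify this identity by a direct reindexing of the quadruple sum: $p_A(X,Y) = \sum_{i,j,k,l} X_{ij} A_{ijkl} Y_{kl} = -\sum_{i,j,k,l} X_{ij} A_{klij} Y_{kl} = -\sum_{i,j,k,l} Y_{kl} A_{klij} X_{ij} = -p_A(Y,X)$, where the last step is just relabeling $(i,j)\leftrightarrow(k,l)$. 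In particular, taking $X = Y$ gives $p_A(X,X) = 0$ for every $X \in \cX$.

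Next I would use this to sandwich the value. By definition of $V$ via the minimax relation, and using that the max and min agree,
\[
  V \ = \ \max_{X \in \cX} \min_{Y \in \cY} p_A(X,Y) \ \le \ \max_{X \in \cX} p_A(X,X) \ = \ 0,
\]
since for each fixed $X$ the inner minimum over $Y$ is at most the particular value at $Y = X$. Symmetrically, using $V = \min_{Y \in \cY} \max_{X \in \cX} p_A(X,Y)$ and the identity $p_A(X,Y) = -p_A(Y,X)$,
\[
  V \ = \ \min_{Y \in \cX} \max_{X \in \cX} p_A(X,Y) \ \ge \ \min_{Y \in \cX} p_A(Y,Y) \ = \ 0,
\]
where the inequality comes from bounding the inner maximum below by its value at $X = Y$. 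Combining the two bounds yields $V = 0$.

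There is essentially no obstacle here; the only point requiring a moment's care is the implicit assumption that $m = n$ so that the antisymmetry relation is meaningful and $X = Y$ makes sense — I would state this at the outset. (Alternatively, if one wants to avoid even mentioning it, one can observe that the hypothesis $A_{ijkl} = -A_{klij}$ taken over \emph{all} index tuples already presupposes the two index ranges coincide.) Everything else is the elementary relabeling argument above together with the existence of the value, which is guaranteed by the minimax theorem for bilinear functions over compact convex sets quoted just before Theorem~\ref{th:zero-sum-spectrahedron}.
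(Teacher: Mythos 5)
Your proof is correct and uses essentially the same idea as the paper: both arguments substitute $Y=X$ and exploit the antisymmetry $A_{ijkl}=-A_{klij}$ to force the diagonal payoff to vanish, then sandwich the value $V$ from both sides of the minimax relation. The only cosmetic difference is that you first establish $p_A(X,X)=0$ for all $X$, whereas the paper fixes an optimal strategy and adds the two inequalities $p_A(X,X)\ge V$ and $-p_A(X,X)\ge V$; your remark that the hypothesis implicitly requires $\mathcal{X}=\mathcal{Y}$ is a fair point of care.
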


In the proof, we employ a simple symmetry consideration.

\begin{proof}
Let $V$ denote the value of $G$. Then, there exists an $X \in \mathcal{X}$ such
that for all $Y \in \mathcal{Y}$, we have
$ \sum_{i,j,k,l} X_{ij} A_{ijkl} Y_{kl} \ \ge \ V.$
In particular, this implies 
\begin{equation}
  \label{eq:antisymm1}
  \sum_{i,j,k,l} X_{ij} A_{ijkl} X_{kl} \ \ge \ V.
\end{equation}
If we rearrange the order of the summation and use the precondition, then 
\begin{equation}
  \label{eq:antisymm2}
  \sum_{i,j,k,l} X_{ij} A_{klij} X_{kl} \ = \ \sum_{i,j,k,l} - X_{ij} A_{ijkl} X_{kl} \ \ge \ V.
\end{equation}
Adding~\eqref{eq:antisymm1} and~\eqref{eq:antisymm2} yields $V \le 0$.
Analogously, there exists some $Y \in \mathcal{Y}$ such that for all $X \in \mathcal{X}$, 
we have $ \sum_{i,j,k,l} X_{ij} A_{ijkl} Y_{kl} \ \le \ V.$ Arguing similarly as before, we
can deduce $V \ge 0$. Altogether, this gives $V = 0$.
\end{proof}

We characterize which a-priori-strategy player~1 will play if her
strategy will be revealed to player~2 (max-min-strategy). In the following
lemma, the symmetric $n \times n$-matrix $T$ plays the role of a
slack matrix.

\begin{lemma}
\label{le:apriori1}
Let $(A,B)$ be a semidefinite zero-sum game.
As an a-priori-strategy, player~1 plays an optimal solution 
$X$ of the SDP
\begin{equation}
  \label{eq:sdp-minmax1}
         \max_{X,T \succeq 0, \, v_1 \in \R} \big\{ v_1 \, : \, v_1 I_n + T = (\langle X, A_{\cdot\cdot kl} \rangle)_{1 \le k,l \le n} , \, \tr(X) = 1 \big\}.
\end{equation}
The optimal value of this optimization problem is attained.
\end{lemma}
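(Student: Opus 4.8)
The plan is to show that the max-min strategy problem of player~1, namely
\[
  \max_{X \in \cX} \min_{Y \in \cY} \langle (\langle X, A_{\cdot\cdot kl}\rangle)_{k,l}, Y \rangle,
\]
can be rewritten as the SDP~\eqref{eq:sdp-minmax1}. First I would fix a strategy $X \in \cX$ of player~1 and analyze the inner minimization over $Y \in \cY$. Writing $M(X) := (\langle X, A_{\cdot\cdot kl}\rangle)_{1 \le k,l \le n} \in \sym_n$ for the resulting payoff matrix, the inner problem is $\min_{Y \succeq 0, \tr(Y) = 1} \langle M(X), Y\rangle$. This is a standard semidefinite program over the spectraplex, whose optimal value equals the smallest eigenvalue $\lambda_{\min}(M(X))$ of $M(X)$, attained at a rank-one $Y$ given by a corresponding unit eigenvector. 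Hence the inner value, as a function of $X$, is exactly $\lambda_{\min}(M(X))$.

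Next I would encode the scalar quantity $\lambda_{\min}(M(X))$ via the familiar semidefinite reformulation: for a symmetric matrix $M$ and a scalar $v_1$ one has $v_1 \le \lambda_{\min}(M)$ if and only if $M - v_1 I_n \succeq 0$. Introducing the slack matrix $T := M(X) - v_1 I_n \succeq 0$ turns the constraint into the linear matrix equation $v_1 I_n + T = M(X) = (\langle X, A_{\cdot\cdot kl}\rangle)_{1 \le k,l \le n}$. Maximizing $v_1$ subject to this, together with $X \succeq 0$, $T \succeq 0$ and $\tr(X) = 1$, therefore computes $\max_{X \in \cX} \lambda_{\min}(M(X)) = \max_{X \in \cX}\min_{Y \in \cY} p_A(X,Y)$, which is precisely the max-min value $V$ of player~1; this is problem~\eqref{eq:sdp-minmax1}. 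Note that $T$ is genuinely a matrix slack (not a scalar one), reflecting that the inner constraint is an eigenvalue bound rather than finitely many linear inequalities, which is the key structural difference from the bimatrix case.

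Finally I would argue that the optimum is attained. The feasible set of~\eqref{eq:sdp-minmax1} is nonempty (pick any $X \in \cX$, set $v_1 = \lambda_{\min}(M(X))$ and $T = M(X) - v_1 I_n$) and the objective is bounded above, since $\lambda_{\min}(M(X)) \le \langle M(X), Y\rangle$ for any fixed $Y \in \cY$ and the latter is a continuous function of $X$ over the compact set $\cX$. Attainment then follows from compactness: the set of $X \in \cX$ is compact, $v_1 = \lambda_{\min}(M(X))$ is a continuous function of $X$ (eigenvalues depend continuously on the matrix entries, and $X \mapsto M(X)$ is linear), so the supremum is achieved at some $X^\ast$, and the corresponding $T^\ast = M(X^\ast) - v_1^\ast I_n$ is feasible. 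Alternatively, one may invoke the minimax theorem cited before Theorem~\ref{th:zero-sum-spectrahedron}, which already guarantees that optimal strategies exist in the zero-sum case. I do not anticipate a serious obstacle here; the only point requiring care is the correct bookkeeping of the bisymmetry of $A$ when identifying the inner minimization value with $\lambda_{\min}(M(X))$, and the observation that the problem is a bona fide SDP in the sense of~\eqref{eq:sdp-form2} so that subsequent duality arguments apply.
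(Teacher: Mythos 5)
Your proposal is correct, and its overall architecture matches the paper's: fix $X$, replace the inner minimization over $Y\in\cY$ by the problem of maximizing $v_1$ subject to $v_1 I_n + T = (\langle X, A_{\cdot\cdot kl}\rangle)_{k,l}$ with $T\succeq 0$, and then fold this into the outer maximization over $X$. The difference lies in how the two key steps are justified. You establish the equality of the inner minimum with $\max\{v_1 : v_1 I_n + T = M(X),\, T\succeq 0\}$ directly, via the identity $\min_{Y\in\cY}\langle M(X),Y\rangle=\lambda_{\min}(M(X))$ and the elementary equivalence $v_1\le\lambda_{\min}(M)\iff M-v_1I_n\succeq 0$; the paper instead writes the inner problem and its SDP dual and invokes strong duality, checking Slater's condition on both sides (the normalized identity is strictly feasible for the primal, and a sufficiently negative $v_1$ is strictly feasible for the dual). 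Likewise, for attainment you argue by continuity of $\lambda_{\min}$ and compactness of $\cX$, whereas the paper gets attainment of the inner problems from the same Slater argument and leaves the outer attainment implicit. Your route is more elementary and self-contained (the paper itself remarks, after the lemma, that the inner value is exactly the smallest eigenvalue, so your viewpoint is consistent with theirs); the paper's duality-based route has the advantage of setting up the machinery reused verbatim in Lemma~\ref{le:apriori2} and Lemma~\ref{le:strat-duality}, where the primal--dual pairing of the two players' SDPs is the essential point. Your closing observation that the problem should be recognized as an SDP in the sense of~\eqref{eq:sdp-form2} so that later duality arguments apply is exactly the role the paper assigns to Lemma~\ref{le:strat-duality}. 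No gap.
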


\begin{proof}
For an a priori known strategy of player~1, player~2 will play a
best response, i.e., an optimal solution of the problem
\begin{eqnarray}
  & & \min_Y \{ p_A(X,Y) \ : \ \tr(Y) = 1, \, Y \succeq 0\} \nonumber \\
  & = &  \min_Y \{ \langle (\langle X, A_{\cdot\cdot kl} \rangle)_{1 \le k,l \le n}, Y \rangle  \ : \ \tr(Y) = 1, \, Y \succeq 0\} \label{eq:primal1} \, .
\end{eqnarray}
In what follows we see that the optimal value of the 
minimization problem is attained and that strong duality holds. 
As a-priori-strategy, player~1 uses an optimal solution of
\[
  \max_{X \succeq 0, \, \tr(X) = 1} \min_Y \{ \langle ( \langle X, A_{\cdot\cdot kl} \rangle)_{1 \le k,l \le n}, Y \rangle \ : \ \tr(Y) = 1, \, Y \succeq 0\} \, .
\]

We write the inner minimization
problem of the minmax problem in terms of
the dual SDP. This gives
\begin{eqnarray}
    & & \min_Y \{ \langle ( \langle X,A_{\cdot\cdot kl} \rangle)_{1 \le k,l \le n}, 
    Y \rangle \ : \ \tr(Y) = 1, \, Y \succeq 0\}, \label{eq:dual3} 
    \label{eq:dual4a}\\
     & = & \max_{T, \, v_1} \{ 1 \cdot v_1 \, : \, v_1 I_n + T = (\langle X,A_{\cdot\cdot kl}  \rangle)_{1 \le k,l \le n}, \, T \succeq 0, v_1 \in \R \} \, .
     \label{eq:dual4b}
\end{eqnarray}
Note that the scaled unit matrix $\frac{1}{n} I_n$
is a strictly feasible point for the
minimization problem~\eqref{eq:dual3}. If we choose a negative $v_1$
with sufficiently large absolute value, then the maximization 
problem~\eqref{eq:dual4b} has a strictly feasible point as well. Hence,
the duality theory for semidefinite programming implies that both the
minimization and the maximization problems attain the optimal value.
In connection with the outer 
maximization this
gives the semidefinite program~\eqref{eq:sdp-minmax1}.
\end{proof}

We remark that 
the expressions~\eqref{eq:dual4a} and~\eqref{eq:dual4b} can be 
interpreted as the smallest eigenvalue of the matrix 
$(\langle X,A_{\cdot\cdot kl}  \rangle)_{1 \le k,l \le n}$
(see, e.g., \cite{vandenberghe-boyd-survey}).
Further note that the SDP~\eqref{eq:sdp-minmax1}
is not quite in one of the normal forms, 
see Lemma~\ref{le:strat-duality} below.

Next, we characterize the a-priori-strategies of player~2 in terms of a
minimization problem to facilitate the duality reasoning. Similar to 
Lemma~\ref{le:apriori1}, the symmetric $n \times n$-matrix $S$
serves as a slack matrix.

\begin{lemma}
\label{le:apriori2}
Let $(A,B)$ be a semidefinite zero-sum game.
As an a-priori-strategy, player~2 plays an optimal solution $Y$ of the SDP
\begin{equation}
  \label{eq:sdp-minmax2}
          \min_{Y, S \succeq 0, \, u_1 \in \R} \{ - u_1 \, : \, u_1 I_n + S = (\langle -A_{ij\cdot\cdot}, Y \rangle)_{1 \le i,j \le n}, \, \tr(Y) = 1\} \, .
\end{equation}
The optimal value of this optimization problem is attained.
\end{lemma}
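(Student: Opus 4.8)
The plan is to mirror the proof of Lemma~\ref{le:apriori1} with the roles of the two players interchanged. Fix a strategy $Y \in \cY$ of player~2 that is revealed to player~1 in advance, and set $M_Y := \big(\langle A_{ij\cdot\cdot}, Y\rangle\big)_{i,j}$, the symmetric matrix obtained by contracting the last two indices of $A$ against $Y$, so that $p_A(X,Y) = \langle M_Y, X\rangle$. Player~1 then best-responds by solving
\[
  \max_X \{\, \langle M_Y, X\rangle \ : \ \tr(X) = 1,\ X \succeq 0 \,\},
\]
and, anticipating this, player~2 chooses $Y \in \cY$ so as to minimize the value of this inner maximization; that is, player~2 solves $\min_{Y \succeq 0,\ \tr(Y) = 1} \max_X \{ \langle M_Y, X\rangle : \tr(X) = 1,\ X \succeq 0 \}$.

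Next I would rewrite the inner problem via $\max_X\{\langle M_Y, X\rangle : \tr(X)=1,\ X\succeq 0\} = -\min_X\{\langle -M_Y, X\rangle : \tr(X)=1,\ X\succeq 0\}$ and pass to the dual of the minimization problem on the right. Exactly as in Lemma~\ref{le:apriori1} (and corresponding to the smallest-eigenvalue characterization of the minimization, here of $-M_Y$), the dual is $\max_{S,\,w_1}\{\, w_1 : w_1 I + S = -M_Y,\ S \succeq 0,\ w_1 \in \R \,\}$. Strong duality and attainment on both sides then follow from Slater's condition precisely as before: the identity matrix is strictly feasible for the minimization problem, while choosing $w_1$ with sufficiently large negative value yields a strictly feasible point for the maximization. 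Consequently the inner value equals $-\max_{S,w_1}\{ w_1 : w_1 I + S = -M_Y,\ S\succeq 0\} = \min_{S,\,w_1}\{\, -w_1 : w_1 I + S = (\langle -A_{ij\cdot\cdot}, Y\rangle)_{i,j},\ S \succeq 0 \,\}$, and the optimum is attained.

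Substituting this expression into the outer minimization over $Y \in \cY$ gives exactly the SDP~\eqref{eq:sdp-minmax2}, with $S$ playing the role of the slack matrix. Attainment of the overall optimal value follows either from compactness of $\cY$ together with continuity of $Y \mapsto \lambda_{\max}(M_Y)$ (the largest eigenvalue of a matrix depending linearly on $Y$), or directly from the minimax theorem recalled at the beginning of Section~\ref{se:zero-sum}, which already guarantees that player~2 has an optimal strategy; one then checks that this optimal $Y$, together with the inner optimal $(S,w_1)$, solves~\eqref{eq:sdp-minmax2}. I do not expect a genuine obstacle here: the only points requiring care are the sign bookkeeping when converting the player-1 maximization into a minimization and back through SDP duality, and the verification of Slater's condition on both the primal and the dual side, both of which are routine.
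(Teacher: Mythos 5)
Your proposal is correct and follows essentially the same route as the paper: express player~2's a-priori strategy as the min-max problem, dualize the inner best-response SDP of player~1 via strong duality (Slater holding on both sides), and fold the resulting dual into the outer minimization to obtain~\eqref{eq:sdp-minmax2}. The sign bookkeeping checks out; the only nitpick is that the strictly feasible point for the trace-one minimization should be $\tfrac{1}{m}I_m$ rather than the identity itself (an imprecision the paper's own proof shares).
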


\begin{proof}
The proof is similar to Lemma~\ref{le:apriori1}.
As an a-priori-strategy, player~2 uses an optimal solution of
\[ 
  \min_{Y \succeq 0, \, \tr(Y) = 1} \max_X \{ \langle (\langle A_{ij\cdot\cdot}, Y\rangle)_{1 \le i,j \le n}, X \rangle \ : \ \tr(X) = 1, \, X \succeq 0\} \, .
\]
Since both the inner optimization problem and its dual are strictly
feasible, strong duality holds. The statement then follows from the
duality relation
\[
 \max_{X \succeq 0} \{ \langle (\langle A_{ij\cdot\cdot}, Y\rangle)_{i,j}, X \rangle \ : \ 
    \langle I_n, X \rangle = 1 \} 
     \ = \ \min_{S \succeq 0, \, u_1 \in \R} \{ - u_1 \, : \, u_1 I_n + S = (\langle -A_{ij\cdot\cdot}, Y \rangle)_{i,j} \}.
\]
\end{proof}

\begin{rem}

Similar to the case of zero-sum matrix games, if the coefficients of the
payoff tensor are chosen sufficiently generically, 
then the SDPs~\eqref{eq:sdp-minmax1} and~\eqref{eq:sdp-minmax2}
have a unique optimal solution and, as a consequence, each player has
a unique optimal strategy.
In contrast to the set of optimal strategies
of zero-sum matrix games, it is possible that the set of optimal strategies 
of a semidefinite game is non-polyhedral.
For example, already in the trivial semidefinite game with the zero matrix $A$, 
the value is 0 and the set of optimal strategies of
player 1 and player 2 are the full sets $\cX$ and $\cY$, respectively.
\end{rem}

Now we show that the sets of optimal strategies of the two players
can be regarded as the optimal solutions of a pair of dual SDPs.

\begin{lemma}
\label{le:strat-duality}
The SDPs~\eqref{eq:sdp-minmax1} and~\eqref{eq:sdp-minmax2} are dual
to each other.
\end{lemma}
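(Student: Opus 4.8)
The plan is to put both SDPs~\eqref{eq:sdp-minmax1} and~\eqref{eq:sdp-minmax2} into the standard primal/dual normal forms~\eqref{eq:sdp-form1}–\eqref{eq:sdp-form2} and then read off that they are a dual pair. First I would rewrite~\eqref{eq:sdp-minmax1}. Observe that the constraint $v_1 I_n + T = (\langle X, A_{\cdot\cdot kl}\rangle)_{k,l}$ together with $\tr(X)=1$ are linear conditions on the decision variables $(X,T,v_1)$, where $X \succeq 0$, $T \succeq 0$. Grouping $(X,T)$ into a block-diagonal positive semidefinite matrix $Z = \blkdiag(X,T) \in \sym_{m+n}$ and keeping $v_1$ as a free scalar, the objective $v_1$ is linear and all constraints are affine-linear equalities in $(Z,v_1)$; this is exactly the shape of a dual-form SDP~\eqref{eq:sdp-form2} (maximize a linear functional subject to $\sum y_i A_i + S = C$), once one identifies the free variable $v_1$ with a $y$-coordinate and the block-PSD slack with $S$. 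Symmetrically, the minimization problem~\eqref{eq:sdp-minmax2} is to be matched with the primal form~\eqref{eq:sdp-form1}: the linear objective $-w_1$, the linear equality constraints coupling $Y$, $S$, $w_1$, and the conditions $Y \succeq 0$, $S \succeq 0$, $w_1 \in \R$ all fit the primal normal form after the analogous block-diagonal bookkeeping.

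The substantive step is to verify that the \emph{same} data $(A_{ijkl}$, the trace functionals, the identity matrices$)$ appearing in the two SDPs are in the primal/dual correspondence, i.e.\ that the matrix of one problem's equality constraints is literally the transpose of the other's and the cost vector of one is the right-hand side of the other. Concretely, I would show: the equality constraint of~\eqref{eq:sdp-minmax1}, written entrywise, reads $\langle X, A_{\cdot\cdot kl}\rangle - v_1 \delta_{kl} = T_{kl}$ for all $k,l$, i.e.\ $\langle E_{kl}, T\rangle = \langle A_{\cdot\cdot kl}, X\rangle - v_1\langle I_n, E_{kl}\rangle$; these, plus $\tr(X)=1$, are dualized by introducing a multiplier matrix $Y \in \sym_n$ for the matrix equality and a scalar multiplier $w_1$ for the trace equality. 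Forming the Lagrangian of~\eqref{eq:sdp-minmax1} and eliminating $X,T,v_1$ by stationarity then produces precisely~\eqref{eq:sdp-minmax2}: the $T$-block forces $Y \succeq 0$ (after sign bookkeeping $S := $ the slack in the $X$-block), the $v_1$-stationarity forces $\langle I_n, Y\rangle = 1$, i.e.\ $\tr(Y)=1$, and the $X$-stationarity gives $w_1 I_n + S = (\langle -A_{ij\cdot\cdot}, Y\rangle)_{i,j}$, while the dual objective becomes $-w_1$. This matches~\eqref{eq:sdp-minmax2} term by term.

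The main obstacle I anticipate is purely bookkeeping: keeping track of which variable is ``$y$'', which block plays the role of the slack $S$, and the signs introduced by passing from $A_{\cdot\cdot kl}$ (third/fourth indices fixed) to $A_{ij\cdot\cdot}$ (first/second indices fixed) and by the $-A$ appearing in~\eqref{eq:sdp-minmax2}. Because~\eqref{eq:sdp-minmax1} is, as remarked in the text, ``not quite in one of the normal forms'' (the PSD variable is $\blkdiag(X,T)$ rather than a single matrix, and $v_1$ is free), I would either first homogenize by absorbing $v_1$ into a $1\times 1$ block with a sign-free entry, or simply invoke the standard fact that the Lagrangian dual of a conic program over $\sym^+_{m+n} \times \R$ is again such a program and compute it directly; the block-diagonal structure guarantees the Frobenius inner product splits as $\langle \blkdiag(X,T), \blkdiag(P,Q)\rangle = \langle X,P\rangle + \langle T,Q\rangle$, so no cross terms arise. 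Once the two problems are seen to share transposed constraint data, duality is immediate, and strong duality (attainment on both sides) has already been established in Lemmas~\ref{le:apriori1} and~\ref{le:apriori2}.
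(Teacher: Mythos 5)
Your plan is correct and follows essentially the same route as the paper: the paper likewise reduces the claim to a mechanical normal-form dualization, writing~\eqref{eq:sdp-minmax2} with a single block-diagonal PSD variable $\diag(Y,S,w_1^+,w_1^-)$ and checking that its explicit dual simplifies to~\eqref{eq:sdp-minmax1} (after identifying $W=-X$), which is the mirror image of your proposal to dualize~\eqref{eq:sdp-minmax1} via its Lagrangian. The only cosmetic differences are the direction of the dualization and your choice to keep $v_1,w_1$ as free variables handled by stationarity rather than splitting them into positive and negative parts; the sign bookkeeping you flag does work out as you describe.
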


\begin{proof}
We show that the dual of~\eqref{eq:sdp-minmax2} coincides 
with~\eqref{eq:sdp-minmax1}. Setting
\[
  Y' \ = \ \diag(Y,S,u_1^+, u_1^-),
\]
i.e., the block diagonal matrix with blocks $Y$, $S$, $u_1^+$
and $u_1^-$ (of size $\sym_n, \sym_n, 1, 1)$, 
the problem~\eqref{eq:sdp-minmax2} can be written as
\begin{equation}
\label{eq:sdp3}
\begin{array}{rcl}
  \multicolumn{3}{l}{\min \big\langle 
  \diag(0,0,-1,1), Y' \big\rangle} \\
  \delta_{ij} (u_1^+ - u_1^-) + s_{ij} + \langle A_{ij\cdot\cdot}, Y \rangle & = & 0
    \quad \text{for all } 1 \le i \le j \le n \, , \\
  y_{11} + \cdots + y_{nn} & = & 1 \, , \\
  Y' & \succeq & 0 \, .
\end{array}
\end{equation}

We claim that the dual of~\eqref{eq:sdp3} coincides 
with~\eqref{eq:sdp-minmax1}. Denote by $E_{ij}$ the matrix with $1$ in
row $i$ and column $j$ whenever $i=j$ and with $1/2$ in row $i$
and column $j$ as well as row $j$ and column $i$ otherwise.
The dual is
\begin{equation}
  \label{eq:dual2}
  \begin{array}{rcl}
  \multicolumn{3}{l}{\max_{S',W, w'} w'} \\
    \sum w_{ij} \diag( (A_{ij\cdot\cdot}),
        E_{ij},
        \delta_{ij},
        -\delta_{ij} )
    + w' \diag(I_n,0,0,0)
      + S' & = & \diag(0,0,-1,1), \\
  S' & \succeq & 0, \; W \in \sym_n, \, w' \in \R \, .
  \end{array}
\end{equation}
Observe that $\sum_{i,j} w_{ij} A_{ijkl} = \langle W, A_{\cdot\cdot kl}\rangle$.
The second block in the constraint matrices gives that $W$ is minus the second
block of $S'$, which describes a positive semidefiniteness condition on $-W$.
Then the equations involving $\delta_{ij}$ and $-\delta_{ij}$
ensure that $\sum_{i=1}^n w_{ii} = -1$; namely,
in~\eqref{eq:dual2}, each of 
the two corresponding equations contains a non-negative slack variable.
The combination of these equations shows that both of these slack variables
must be zero. Altogether, this gives
\begin{equation}
  \label{eq:dual7}
         \max_{S^*, -W \succeq 0, w' \in \R} \big\{ w' \, : \, w' I_n + S^* = (\langle -W, A_{\cdot\cdot kl} \rangle)_{1 \le k,l \le n} , \, 
  \tr(W) = -1 \big\}.
\end{equation}
By identifying $W$ with $-X$, we recognize this as the SDP in
~\eqref{eq:sdp-minmax1}
\end{proof}

Now we provide the proof of Theorem~\ref{th:zero-sum-spectrahedron}.

\begin{proof}
Player~1 can achieve at least the gain provided by the 
a-priori-strategy \eqref{eq:sdp-minmax1} from Lemma~\ref{le:apriori1}, and 
player~2 can bound her loss by the a-priori-strategy~\eqref{eq:sdp-minmax2} 
from Lemma~\ref{le:apriori2}. By Lemma~\ref{le:strat-duality}, both strategies
are dual to each other, so that their optimal values coincide with the value of
the game. In the coordinates $(X,T,v_1)$ and
$(Y,S,u_1)$, the feasible regions of those optimization problems are spectrahedra,
and the sets of optimal strategies are the sets of optimal solutions of the SDPs.
Intersecting the feasibility spectrahedron, say, for player~1, with
the hyperplane corresponding to the optimal value of the objective function
shows the first part of the 
theorem. Precisely, we obtain the sets
\begin{eqnarray*}
  & & \{X, T \succeq 0 \, : \, 
    V I_n + T = (\langle X, A_{\cdot\cdot kl} \rangle)_{1 \le k,l \le n} , \, \tr(X) = 1 \big\} \\
 & \text{ and } \; 
          & \{ Y, S \succeq 0 \, : \, - V I_n + S = (\langle -A_{ij\cdot\cdot}, Y \rangle)_{1 \le i,j \le n}, \, \tr(Y) = 1\} \, ,
\end{eqnarray*}  
where $V$ is the value of the game.
By projecting the spectrahedra for the first player
on the $X$-variables, we can deduce that 
in the space $\mathcal{X}$ 
the set of optimal strategies of player~1 is
the projection of a spectrahedron. Similarly, this is true for 
player~2.

Indeed, the set of optimal strategies of each player is not only the projection
of a spectrahedron, but also a spectrahedron. Namely, taking the optimal value of
$v$ (i.e., the value $V$ of the game)
corresponds geometrically to passing over to the intersection with
a separating hyperplane and, because the other additional variables 
(``$T$'')
just refer to a slack matrix,
we see that the set of optimal strategies for
the first player is a spectrahedron. 
In particular, the equation
$V I_n + T = (\langle X, A_{\cdot\cdot kl} \rangle)_{1 \le k,l \le n}$
gives
\[
-V I_n + (\langle X, A_{\cdot\cdot kl} \rangle)_{1 \le k,l \le n} = 
T \succeq 0.
\]
Similar arguments apply for the second player.
Precisely, the spectrahedron for the first player lives in the space
$\sym_n$, whose variables we denote by the symmetric matrix
variable $X$. The inequalities and equations for $X$ are
\[
  \begin{array}{rcl}
    \tr(X) - 1 & = & 0, \\
    -V I_n + (\langle X, A_{\cdot\cdot kl} \rangle)_{1 \le k,l \le n} 
    & \succeq & 0,
  \end{array}
\]
where we can write the equation as
two inequalities and where we can combine all the scalar inequalities
and matrix inequalities into one block matrix inequality.
\end{proof}

\begin{cor}
Explicit LMI descriptions of the sets $\optstrat_1$ and
$\optstrat_2$ of optimal strategies of the two players are
\begin{eqnarray*}
  \optstrat_1 & := & \{X \in \sym_n \, : \, 
   \diag(X, \, 
   -V I_n + (\langle X, A_{\cdot\cdot kl} \rangle)_{1 \le k,l \le n} , \, \\
   & & \; \, \tr(X) - 1, \, 
    1 - \tr(X)) \succeq 0 \}, \\
    \optstrat_2 & := & \{Y \in \sym_n \, : \,
        \diag(Y, \, 
         V I_n + (\langle -A_{ij\cdot\cdot}, Y \rangle)_{1 \le i,j \le n}, \, \\
         & & \; \, \tr(Y)-1, \, 
         1 - \tr(Y)) \succeq 0 \} \, ,
\end{eqnarray*}
where $V$ is the value of the game.
\end{cor}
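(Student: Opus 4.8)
The plan is to read the corollary off from the proof of Theorem~\ref{th:zero-sum-spectrahedron}. In that proof, after intersecting the feasibility spectrahedron of the SDP~\eqref{eq:sdp-minmax1} with the hyperplane $v_1 = V$ (where $V$ is the value of the game) and eliminating the slack matrix $T$, the set of optimal strategies of player~1 is exhibited as
\[
  \mathcal{S}_1 \ = \ \big\{ X \in \sym_n \, : \, X \succeq 0, \ \tr(X) = 1, \ -V I_n + (\langle X, A_{\cdot\cdot kl}\rangle)_{1 \le k,l \le n} \succeq 0 \big\},
\]
and analogously, from~\eqref{eq:sdp-minmax2} via Lemma~\ref{le:apriori2},
\[
  \mathcal{S}_2 \ = \ \big\{ Y \in \sym_n \, : \, Y \succeq 0, \ \tr(Y) = 1, \ V I_n + (\langle -A_{ij\cdot\cdot}, Y\rangle)_{1 \le i,j \le n} \succeq 0 \big\}.
\]
So the only thing left to do is to package the three conditions in each display into a single linear matrix inequality.

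For this I would use the elementary observation that a real symmetric block-diagonal matrix $\diag(M_1, \ldots, M_r)$ is positive semidefinite if and only if each block $M_i$ is positive semidefinite. Applying it to the block matrix in the claimed description of $\mathcal{S}_1$, the first block gives $X \succeq 0$, the second block gives $-V I_n + (\langle X, A_{\cdot\cdot kl}\rangle)_{1 \le k,l \le n} \succeq 0$, and the last two ($1 \times 1$) blocks give the scalar inequalities $\tr(X) - 1 \ge 0$ and $1 - \tr(X) \ge 0$, which jointly are equivalent to $\tr(X) = 1$. This reproduces exactly the three defining conditions of $\mathcal{S}_1$ above; moreover, since $V$ is a fixed real number, every block depends affinely on the entries of $X$, so the expression is indeed an LMI. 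The argument for $\mathcal{S}_2$ is identical, with $Y$, $V I_n + (\langle -A_{ij\cdot\cdot}, Y\rangle)_{1 \le i,j \le n}$, $\tr(Y) - 1$, $1 - \tr(Y)$ in the four blocks.

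I do not expect a real obstacle: the statement is essentially bookkeeping on top of Theorem~\ref{th:zero-sum-spectrahedron}. The two points worth a line of care are that, for each player, the notion of optimal strategy coincides with that of a max-min strategy --- so that Lemmas~\ref{le:apriori1} and~\ref{le:apriori2} are applicable --- which holds because the minimax theorem cited at the start of Section~\ref{se:zero-sum} guarantees the existence of the value $V$; and that the elimination of the slack matrices $T$, $S$ (i.e., replacing ``$\exists\, T \succeq 0$ with $V I_n + T = M$'' by ``$M - V I_n \succeq 0$'') is valid, which is immediate.
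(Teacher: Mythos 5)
Your proposal is correct and follows essentially the same route as the paper: the corollary is obtained by taking the sets exhibited at the end of the proof of Theorem~\ref{th:zero-sum-spectrahedron}, eliminating the slack matrices $T$ and $S$, and combining the resulting matrix and scalar conditions into a single block-diagonal LMI, exactly as the paper indicates. The block-diagonal packaging and the remark that the equation $\tr(X)=1$ becomes two scalar inequalities match the paper's own concluding sentences of that proof.
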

Note that $\mathcal{O}_1$ and $\mathcal{O}_2$ are spectrahedra in
the space of symmetric matrices.

\begin{example}
\label{ex:zero-sum1}
We consider a semidefinite generalization of a $2 \times 2$-zero-sum matrix game 
known as ``Plus one'' (see, for example, \cite{karlin-peres-2017}). The 
payoff matrix of that bimatrix game is 
$A = \ \left( \begin{array} {cc}
    0 & -1 \\
    1 & 0
    \end{array} \right)$
and for each player, the second strategy is dominant.
Let the semidefinite zero-sum game be defined by
\[
  A_{ijkl} \ = \ \begin{cases}
    1 & \max\{i,j\} > \max\{k,l\}, \\
    -1 & \max\{i,j\} < \max\{k,l\}, \\
    0 & \text{else}
  \end{cases}
\]
for $i,j,k,l \in \{1, 2\}$.
If both players play only diagonal strategies (i.e., $X$ and $Y$ are diagonal matrices),
then the payoffs correspond to the payoffs of the underlying matrix game.
By Lemma~\ref{le:antisymm}, the value of the semidefinite game is 0. 
To determine an optimal
strategy for player~1, we consider those points in the feasible set of the
SDP~\eqref{eq:sdp-minmax1} in Lemma~\ref{le:apriori1}, which have
$v_1 = 0$. Hence, we are looking for matrices $T, X \succeq 0$ such that
\[
  T \ = \ \left( \begin{array}{cc}
    \langle X,A_{\cdot\cdot 11} \rangle & \langle X,A_{\cdot \cdot 12} \rangle \\
    \langle X,A_{\cdot\cdot 21} \rangle & \langle X,A_{\cdot \cdot 22} \rangle
  \end{array} \right) 
  \ = \ \left( \begin{array}{cc}
    x_{12} + x_{21} + x_{22} & - x_{11} \\
    - x_{11} & - x_{11}
  \end{array} \right).
\]
Since $X \succeq 0$ implies $x_{11} \ge 0$ and $T \succeq 0$ implies $-x_{11} \ge 0$,
we obtain $x_{11} = 0$. Hence, $x_{22} = 1 - x_{11} = 1$. Further, $X \succeq 0$
yields $x_{12} = x_{21} = 0$. Therefore, the optimal strategy of player~1 is
$\left( \begin{array}{cc}
  0 & 0 \\
  0 & 1
  \end{array} \right)$, and, similarly, the optimal strategy of player~2 is the same one.
\end{example}

\begin{example}
Consider a slightly different version of Example~\ref{ex:zero-sum1}, in which
the optimal strategies are not diagonal strategies.
For $i,j,k,l \in \{1, 2\}$, let
\[
  A_{ijkl} \ = \ (i+j) - (k+l).
\]
By Lemma~\ref{le:antisymm}, the value of the game is 0. If both players
play only diagonal strategies, then the payoffs coincide (up to a factor
of~2 in the payoffs, which is irrelevant for the optimal strategies) with
the payoffs of the underlying zero-sum matrix game.
Interestingly, we show that  the optimal strategies are not diagonal strategies 
here. As in 
Example~\ref{ex:zero-sum1}, we determine an optimal strategy for player~1
by considering the feasible points of the SDP~\eqref{eq:sdp-minmax1} with
$v_1 = 0$. Hence, we search for $T, X \succeq 0$ such that
\[
  T \ = \ \left( \begin{array}{cc}
    x_{12} + x_{21} + 2 x_{22} & - x_{11} + x_{22} \\
    - x_{11} + x_{22} & - 2 x_{11} - x_{12} - x_{21}.
  \end{array} \right).
\]
Since, using the symmetry $x_{12} = x_{21}$, we have
$
  \det T \ = \ - (x_{11} + 2 x_{12} + x_{22})^2,
$
we see that $T \succeq 0$ implies $x_{12} = - \frac{1}{2}(x_{11} + x_{22})$.
Hence,
\[
  X = \left( \begin{array}{cc}
  x_{11} & - \frac{1}{2}(x_{11} + x_{22}) \\
  - \frac{1}{2}(x_{11} + x_{22}) & x_{22}
  \end{array}\right),
\]
so that $X \succeq 0$ implies in connection with the arithmetic-geometric inequality
that $x_{11} = x_{22} = \frac{1}{2}$. Thus, the optimal strategy of player~1 is
$ \frac{1}{2}
\left( \begin{array}{cc}
  1 & - 1 \\
  - 1 & 1
  \end{array} \right)$, and, similarly, the optimal strategy of player~2 is the same one.
  
Note that $X = \left( \begin{array}{cc}
  0 & 0 \\
  0 & 1
  \end{array} \right)$ is not an optimal strategy for player~1, since, for example,
for a given strategy $Y$ of player~2, the payoff is
$
  2 y_{11} + 2 y_{12}.
$
Specifically, the choice 
$Y = \frac{1}{4} \left( \begin{array}{cc}
  1 & - \sqrt{3} \\
  - \sqrt{3} & 3
\end{array} \right)
$ 
yields a payoff of 
$\frac{1}{2}(1-\sqrt{3}) < 0$ for player~1.
\end{example}

\section{The almost-equivalence of semidefinite zero-sum games and 
semidefinite programs\label{se:equivalence}}

We give a semidefinite generalization of Dantzig's almost equivalence of
zero-sum matrix games and linear programming
\cite{dantzig-1951-equivalence}, see also \cite{adler-2013}.
Given an LP in the form
\[
\min_x \{  c^Tx \, : \ Ax \ge 
  b, \, x \ge 0 \}
\]
with $A\in \R^{m\times n}$ and $ b\in \R^m$ and $c \in \R^n$,
Dantzig constructed a zero-sum matrix game with the payoff matrix
\[
  \left( \begin{array}{ccc}
    0         & A & -b \\
    -A^T & 0 & c \\
    b^T & -c^T & 0
  \end{array} \right).
\]
For the semidefinite generalization,
the following variant of a duality statement is convenient, whose
proof is given for the sake of completeness.

\begin{lemma}
\label{le:SDP-modif-nf}
For $A_1, \ldots, A_m, C \in \sym_n$ and $b \in \R^m$,
 the following SDPs in the slightly modified normal forms 
constitute a primal-dual pair.
\begin{eqnarray}
  \label{eq:sdp-form3}
  & & \inf_X \{ \langle C, X \rangle \, : \ \langle A_i, X \rangle \ge 
  b_i, \, 1 \le i \le m, \, X \succeq 0 \} \\
  \label{eq:sdp-form4}
  & \text{ and } &
  \sup_{y,S} \{ b^T y\, : \, \sum_{i=1}^m y_i A_i + S = C, \, y \in \R_+^m, \,
S \succeq 0\}.
\end{eqnarray}
We call them \emph{SDPs in modified primal and dual forms}.
\end{lemma}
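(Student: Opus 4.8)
The plan is to reduce both optimization problems to the standard normal forms~\eqref{eq:sdp-form1} and~\eqref{eq:sdp-form2} by introducing slack variables and a block-diagonal embedding, and then to invoke the known primal--dual relationship between those two standard forms.

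First I would rewrite~\eqref{eq:sdp-form3} in primal normal form. Introduce nonnegative scalar slack variables $t_1, \ldots, t_m$, so that the inequality constraints $\langle A_i, X \rangle \ge b_i$ become the equalities $\langle A_i, X \rangle - t_i = b_i$ together with $t_i \ge 0$. Collecting $X$ and the $t_i$ into the block-diagonal matrix $\widetilde X := \diag(X, t_1, \ldots, t_m) \in \sym^{n+m}$, positive semidefiniteness of $\widetilde X$ is equivalent to $X \succeq 0$ and $t_i \ge 0$ for all $i$. Putting $\widetilde C := \diag(C, 0, \ldots, 0)$ and letting $\widetilde A_i := \diag(A_i, 0, \ldots, 0, -1, 0, \ldots, 0)$ with the entry $-1$ in the $i$-th scalar slot, the problem~\eqref{eq:sdp-form3} becomes $\min_{\widetilde X} \{ \langle \widetilde C, \widetilde X \rangle : \langle \widetilde A_i, \widetilde X \rangle = b_i, \, 1 \le i \le m, \, \widetilde X \succeq 0 \}$, which is an SDP in primal normal form~\eqref{eq:sdp-form1}.

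Next I would write down its dual~\eqref{eq:sdp-form2}, namely $\max_{y, \widetilde S} \{ b^T y : \sum_{i=1}^m y_i \widetilde A_i + \widetilde S = \widetilde C, \, \widetilde S \succeq 0 \}$, and decode the block structure. Writing $\widetilde S = \diag(S, s_1, \ldots, s_m)$, the upper-left $n \times n$ block of the matrix equation reads $\sum_{i=1}^m y_i A_i + S = C$, while the $i$-th scalar block reads $-y_i + s_i = 0$, i.e.\ $s_i = y_i$. Since $\widetilde S \succeq 0$ forces $s_i \ge 0$, this is exactly $y \in \R_+^m$, $S \succeq 0$ and $\sum_{i=1}^m y_i A_i + S = C$, which is~\eqref{eq:sdp-form4}. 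As passing to the standard dual is an involution (the dual of~\eqref{eq:sdp-form2} is~\eqref{eq:sdp-form1}), the same computation read in reverse shows that the dual of~\eqref{eq:sdp-form4} is~\eqref{eq:sdp-form3}; hence the two problems form a primal--dual pair.

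The argument is essentially bookkeeping and I do not expect a genuine obstacle; the one point deserving care is the sign convention. Because a $\ge$-constraint in the primal is turned into an equality by \emph{subtracting} the slack $t_i$, the corresponding diagonal block of $\widetilde A_i$ is $-1$, and this is precisely what yields $s_i = +y_i$ rather than $s_i = -y_i$, hence the nonnegativity $y \ge 0$ in the dual. I would verify this sign explicitly, and also observe that the embedding changes neither the feasible objective values nor feasibility itself, so weak and strong duality between~\eqref{eq:sdp-form3} and~\eqref{eq:sdp-form4} are inherited directly from the standard duality theory for semidefinite programs.
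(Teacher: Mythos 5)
Your proposal is correct and follows essentially the same route as the paper: both introduce diagonal slack entries with a $-1$ in the extended $\widetilde A_i$, embed $C$ block-diagonally, and read off from the standard dual that the scalar slack blocks force $y_i \ge 0$. Your version is if anything slightly more careful about dimensions (you use $(n+m)\times(n+m)$ where the paper writes $2n \times 2n$) and about making the sign bookkeeping explicit.
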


\begin{proof}
We derive these forms from the usual forms, in which the primal
contains a relation ``$=$'' rather than a relation ``$\ge$'' and 
in which the dual uses
an unconstrained variable vector $y$ rather than a non-negative vector.
Starting from the standard pair, we extend each matrix $A_i$ to
a symmetric $2n \times 2n$-matrix via a single additional
non-zero element, namely $-1$, in entry $(n+i,n+i)$ of the modified
$A_i$, $1 \le i \le m$.
Moreover, we formally
embed $C$ into a symmetric $2n \times 2n$-matrix. The dual
(in the original sense) of the extended problem still uses a vector
of length $y$, but the modifications in the primal problem 
give the additional conditions $y_i \ge 0$, $1 \le i \le m$. This shows the
desired modified forms of the primal-dual pair.
\end{proof}

A semidefinite zero-sum game with 
$\mathcal{X}=\mathcal{Y}$
is called \emph{symmetric} if the payoff tensor
$A$ satisfies the skew symmetric relation
$
  A_{ijkl} = - A_{klij}.
$
By Lemma~\ref{le:antisymm},
the value of a symmetric game with payoff tensor $A$ on the strategy
space $\sym_n \times \sym_n$ is zero.
Therefore, there
exists a strategy $\bar{X}$ of the first player such that
\begin{equation}
  \label{eq:opt-strat1}
  \langle ( \langle \bar{X}, A_{\cdot \cdot kl} \rangle)_{1 \le k,l \le n}, Y \rangle \ge 0
  \quad \text{for all } Y \in \sym_n \text{ with }
  \tr(Y) = 1, \ Y \succeq 0.
\end{equation}
Generalizing the notion in \cite{adler-2013}, we call such a strategy 
$\bar{X}$ a \emph{solution} of the 
symmetric game. The condition~\eqref{eq:opt-strat1} states
that the matrix 
$\langle ( \langle \bar{X}, A_{\cdot \cdot kl} \rangle)_{1 \le k,l \le n}$
is contained in the dual cone of $\mathcal{S}_n^+$.
Since the cone $\mathcal{S}_n^+$ of positive semidefinite matrices
is self-dual, \eqref{eq:opt-strat1} translates to
\begin{equation}
  \label{eq:solution-notion}
  (\langle \bar{X}, A_{\cdot \cdot kl} \rangle)_{1 \le k,l \le n}
  \succeq 0.
\end{equation}

It is useful to record the following specific version of a symmetric
minimax theorem, which is a special case of the minimax theorem for
convex games \cite{dresher-karlin-1953} and of the minimax theorem
for quantum games \cite{jain2009parallel}.

\begin{lemma}[Minimax theorem for symmetric semidefinite zero-sum
games]
Let $G$ be a symmetric semidefinite zero-sum game with
payoff tensor $A$. Then there exists a solution
strategy $\bar{X}$, i.e., a matrix $\bar{X} \in \sym_n$
satisfying~\eqref{eq:solution-notion}.
\end{lemma}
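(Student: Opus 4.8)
The statement to prove is the minimax theorem for symmetric semidefinite zero-sum games: for a payoff tensor $A$ with $A_{ijkl} = -A_{klij}$ on the strategy space $\sym_n \times \sym_n$, there exists $\bar{X} \in \cX$ with $(\langle \bar{X}, A_{\cdot\cdot kl}\rangle)_{1\le k,l\le n} \succeq 0$. The plan is to invoke the existence of the value together with Lemma~\ref{le:antisymm}, rather than to re-prove a minimax theorem from scratch. By the classical minimax theorem for bilinear functions over compact convex sets (already cited in the excerpt, \cite{dks-1950,von-neumann-1945}), the value $V$ of the semidefinite zero-sum game exists, and by Lemma~\ref{le:antisymm} the skew symmetry $A_{ijkl} = -A_{klij}$ forces $V = 0$. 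Since $V = \max_{X \in \cX} \min_{Y \in \cY} p_A(X,Y) = 0$ and the outer maximum is attained (the domain $\cX$ is compact and $\min_{Y}p_A(X,Y)$ is continuous in $X$ as a minimum of a jointly continuous function over the compact set $\cY$), there exists $\bar{X} \in \cX$ with $\min_{Y\in\cY} p_A(\bar{X},Y) = 0$, i.e. $\langle (\langle \bar{X}, A_{\cdot\cdot kl}\rangle)_{k,l}, Y\rangle \ge 0$ for all $Y \in \cY$. This is precisely condition~\eqref{eq:opt-strat1}.

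The remaining step is the translation from~\eqref{eq:opt-strat1} to~\eqref{eq:solution-notion}, which the excerpt has already carried out in the paragraph preceding the lemma: the condition that $\langle M, Y\rangle \ge 0$ for all $Y \in \sym_n^+$ with $\tr(Y)=1$ is equivalent (by scaling, since every nonzero $Y\succeq 0$ is a positive multiple of a trace-one one) to $\langle M, Y\rangle \ge 0$ for all $Y \succeq 0$, which by self-duality of the cone $\sym_n^+$ is equivalent to $M = (\langle \bar{X}, A_{\cdot\cdot kl}\rangle)_{1\le k,l\le n} \succeq 0$. Putting these pieces together gives the solution strategy $\bar{X}$.

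Alternatively, one can give a self-contained argument that does not even cite the general bilinear minimax theorem: apply Lemma~\ref{le:apriori1} to the symmetric game. That lemma produces an optimal a-priori strategy $X$ of player~1 as a solution of the SDP~\eqref{eq:sdp-minmax1}, whose attained optimal value is the value $V$; combining with Lemma~\ref{le:antisymm} (which gives $V=0$) and reading off the feasibility condition $v_1 I_n + T = (\langle X, A_{\cdot\cdot kl}\rangle)_{k,l}$ with $v_1 = 0$ and $T \succeq 0$ yields $(\langle \bar{X}, A_{\cdot\cdot kl}\rangle)_{k,l} = T \succeq 0$ directly.

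I do not expect any serious obstacle here: the lemma is essentially a bookkeeping consequence of Lemma~\ref{le:antisymm} plus the attainment of the max in the minimax relation, both of which are already established. The only point requiring a word of care is the equivalence between testing positivity against trace-one PSD matrices versus against the whole cone (the scaling/self-duality remark), but this is exactly the computation already spelled out in the text around~\eqref{eq:solution-notion}, so it can simply be referred to.
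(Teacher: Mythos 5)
Your proposal is correct and follows essentially the same route as the paper: the paper's proof likewise takes the existence of $\bar{X}$ satisfying~\eqref{eq:opt-strat1} from the value being zero (via Lemma~\ref{le:antisymm} and attainment of the outer maximum) and then applies self-duality of $\sym_n^+$ to obtain~\eqref{eq:solution-notion}. Your version merely spells out the attainment and scaling steps that the paper leaves implicit, and the alternative via Lemma~\ref{le:apriori1} is a valid minor variant.
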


\begin{proof}
There exists a strategy $\bar{X}$ satisfying~\eqref{eq:opt-strat1}.
By the considerations before the theorem, 
we obtain~\eqref{eq:solution-notion}.
\end{proof}

Now we generalize the Dantzig construction. 
Given an SDP in modified normal form of Lemma~\ref{le:SDP-modif-nf},
we define the following \emph{semidefinite Dantzig game}
on $\mathcal{S}_{n+m+1} \times \mathcal{S}_{n+m+1}$.
The strategies of both players can be viewed
as positive semidefinite block matrices 
$\diag(X,y,t)$ with $X \in \sym_n^+$, $y \in \R_+^m$, 
$t \in \R_+$ and trace~1.  

The payoff tensor $Q$ is defined as follows.
For $1 \le k,l \le n$ and $1 \le j \le m$, let
\[
  Q_{k,l,n+j,n+j} \ = \ - Q_{n+j,n+j,k,l} \ = \ (A_j)_{kl} \, .
\]
For $1 \le k,l \le n$, let
\[
    Q _{n+m+1,n+m+1,k,l} 
   \ = \ - Q_{k,l,n+m+1,n+m+1}
   \ = \ C_{kl} \, .
\]
For $1 \le i \le m$, let
\[
  Q_{n+i,n+i,n+m+1,n+m+1} 
  \ = \ 
  -Q _{n+m+1,n+m+1,n+i,n+i} 
  \ = \ b_i \, .
\]
All other entries in the payoff tensor $Q$ are zero. Note that $Q$ has a block
structure: For every non-zero entry $Q_{ijkl}$, we have either
$i,j \in \{1, \ldots, n\}$ or $i,j \in \{n+1, \ldots, n+m\}$
or $i=j=n+m+1$. An analogous property holds for $k,l$.

Let $\diag(\bar{X}, \bar{y}^T, \bar{t})^T$ be a solution to
this symmetric game. By the definition of a solution, we have
\begin{equation}
  \label{eq:dantzig-solution}
  ( \langle \diag(\bar{X}, \bar{y}, \bar{t}),
  Q_{\cdot \cdot kl} \rangle)_{1 \le k,l \le n + m+1} \ \succeq \ 0.
\end{equation}
Due to the block structure of the payoff tensor $Q$, the matrix on the left-hand side 
of~\eqref{eq:dantzig-solution} has a block structure as well.
The upper left $n \times n$-matrix in~\eqref{eq:dantzig-solution}
gives the condition $- \sum_{j=1}^n \bar{y}_j A_j + \bar{t}C \succeq 0$.
The square submatrix in~\eqref{eq:dantzig-solution} indexed by
$n+1, \ldots, n+m$ is a diagonal matrix and gives
the conditions
$\langle A_i, \bar{X} \rangle - b_i \bar{t} \ge 0$, $1 \le i \le m$.
The right lower entry in~\eqref{eq:dantzig-solution} gives
$b^T \bar{y} - \langle C, \bar{X} \rangle \ge 0$.
Hence, \eqref{eq:dantzig-solution} is equivalent to the system
\begin{eqnarray}
\langle A_i, \bar{X} \rangle - b_i \bar{t} & \ge & 0, \quad 1 \le i \le m, 
  \label{eq:dantzig5} \\
- \sum_{j=1}^m \bar{y}_j A_j + \bar{t} C & \succeq & 0, 
  \label{eq:dantzig6} \\
b^T \bar{y} - \langle C, \bar{X} \rangle & \ge & 0,
  \label{eq:dantzig7}
\end{eqnarray}
and in addition, we have the conditions defining a strategy,
\[
 \bar{y} \ge 0, \; \bar{X} \succeq 0, \; \bar{t} \ge 0 \text{ and } 
  \mathbf{1}^T \bar{y} + \tr(\bar{X}) + \bar{t} \ = \ 1.
\]

This allows us to state the following result on the almost equivalence
of semidefinite zero-sum games and semidefinite programs.
Recall that reducing the equilibrium
problem in a semidefinite zero-sum game to
semidefinite programming follows from Section~\ref{se:zero-sum}.

\begin{thm}
 \label{th:equivalence1}
The following holds for the semidefinite Dantzig game:
\begin{enumerate}
\item $\bar{t} (b^T \bar{y} - \langle C, \bar{X} \rangle) = 0$.
\item If $\bar{t} > 0$, then $\bar{X} \bar{t}^{-1}$, 
$\bar{y} \bar{t}^{-1}$ and some corresponding slack matrix
$\bar{S}$
are an optimal solution to the primal-dual SDP pair given 
in \eqref{eq:sdp-form3} and \eqref{eq:sdp-form4}.
\item If $b^T \bar{y} - \langle C, \bar{X} \rangle > 0$, then the primal problem
  or the dual problem is infeasible.
\end{enumerate} 
\end{thm}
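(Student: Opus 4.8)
The plan is to extract everything from the three inequalities \eqref{eq:dantzig5}, \eqref{eq:dantzig6}, \eqref{eq:dantzig7} together with the strategy normalization, exactly mirroring the classical Dantzig argument but keeping track of the positive semidefinite slack matrix instead of a slack vector. First I would prove part (1). Multiply \eqref{eq:dantzig5} by $\bar{y}_i \ge 0$ and sum over $i$ to get $\sum_i \bar{y}_i \langle A_i, \bar{X}\rangle - \bar{t}\, b^T\bar{y} \ge 0$. Next, pair \eqref{eq:dantzig6} with $\bar{X}\succeq 0$ via the self-duality of the psd cone (nonnegativity of the Frobenius inner product of two psd matrices): $\langle -\sum_j \bar{y}_j A_j + \bar{t} C,\ \bar{X}\rangle \ge 0$, i.e. $\bar{t}\,\langle C,\bar{X}\rangle - \sum_j \bar{y}_j\langle A_j,\bar{X}\rangle \ge 0$. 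Adding these two inequalities makes the $\sum_j \bar{y}_j\langle A_j,\bar{X}\rangle$ terms cancel and yields $\bar{t}\big(\langle C,\bar{X}\rangle - b^T\bar{y}\big) \ge 0$, that is $\bar{t}\big(b^T\bar{y} - \langle C,\bar{X}\rangle\big) \le 0$. But \eqref{eq:dantzig7} gives $b^T\bar{y} - \langle C,\bar{X}\rangle \ge 0$ and $\bar{t}\ge 0$, so the product is $\ge 0$ as well; hence it equals $0$, proving (1).

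For part (2), assume $\bar{t}>0$ and set $X^\ast := \bar{t}^{-1}\bar{X}$, $y^\ast := \bar{t}^{-1}\bar{y}$, and $S^\ast := \bar{t}^{-1}\big(-\sum_j \bar{y}_j A_j + \bar{t} C\big) = C - \sum_j y^\ast_j A_j$. Dividing \eqref{eq:dantzig5} by $\bar{t}$ shows $\langle A_i, X^\ast\rangle \ge b_i$ and $X^\ast\succeq 0$, so $X^\ast$ is feasible for the modified primal \eqref{eq:sdp-form3}; dividing \eqref{eq:dantzig6} by $\bar t$ shows $S^\ast\succeq 0$, together with $y^\ast \ge 0$ (from $\bar y\ge 0$) and the defining equation $\sum_j y^\ast_j A_j + S^\ast = C$, so $(y^\ast, S^\ast)$ is feasible for the modified dual \eqref{eq:sdp-form4}. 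Finally, dividing \eqref{eq:dantzig7} by $\bar{t}$ gives $b^T y^\ast \ge \langle C, X^\ast\rangle$, i.e. the dual objective is at least the primal objective at these feasible points; since weak duality for the primal-dual pair of Lemma~\ref{le:SDP-modif-nf} gives the reverse inequality, we get $b^T y^\ast = \langle C, X^\ast\rangle$, and equality of objectives at feasible points certifies optimality for both. This proves (2).

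For part (3), suppose $b^T\bar{y} - \langle C,\bar{X}\rangle > 0$. By part (1) this forces $\bar{t} = 0$. Plugging $\bar{t}=0$ into \eqref{eq:dantzig5}--\eqref{eq:dantzig7} leaves $\langle A_i,\bar{X}\rangle \ge 0$ for all $i$, $-\sum_j\bar{y}_j A_j \succeq 0$, and $b^T\bar{y} - \langle C,\bar{X}\rangle > 0$, with $\bar{y}\ge 0$, $\bar{X}\succeq 0$. Now argue by contradiction: if both the primal \eqref{eq:sdp-form3} and the dual \eqref{eq:sdp-form4} were feasible, pick feasible $X_0$ (with $\langle A_i,X_0\rangle\ge b_i$, $X_0\succeq 0$) and $(y_0,S_0)$ (with $\sum_j (y_0)_j A_j + S_0 = C$, $y_0\ge 0$, $S_0\succeq 0$). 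Then $b^T\bar{y} \le \sum_j\bar{y}_j\langle A_j, X_0\rangle$ (using $\bar y\ge 0$ and the primal feasibility of $X_0$, pairing each $b_j$ with $\bar y_j$), while $\sum_j\bar{y}_j\langle A_j,X_0\rangle = \langle \sum_j\bar y_j A_j, X_0\rangle = \langle C - S_0, X_0\rangle \le \langle C, X_0\rangle$ since $\langle S_0, X_0\rangle\ge 0$ by psd self-duality, and $\langle C, X_0\rangle \le \langle C,\bar X\rangle + \langle S_0,\bar X\rangle$... more directly: $\langle C,\bar X\rangle = \langle \sum_j (y_0)_j A_j + S_0,\ \bar X\rangle \ge \sum_j (y_0)_j\langle A_j,\bar X\rangle \ge 0$ (using $y_0\ge 0$, $\langle A_j,\bar X\rangle\ge 0$, and $\langle S_0,\bar X\rangle\ge 0$), and symmetrically $b^T\bar y \le \langle C, X_0\rangle$ via the chain above, then combining $b^T\bar y\le \langle C,X_0\rangle$ with the finite value $V^\ast := \langle C,X_0\rangle$ bound and $\langle C,\bar X\rangle\ge 0$ contradicts $b^T\bar y > \langle C,\bar X\rangle$ being arbitrarily scalable --- the cleanest route is to note $\bar y, \bar X$ (with $\bar t=0$) form an unbounded improving ray, so feasibility of both primal and dual would violate weak duality; I would write this ray argument carefully. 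The main obstacle I anticipate is precisely this last step: making the ray/infeasibility dichotomy airtight (identifying $(\bar y,\bar X)$ as a direction of unboundedness for whichever program is feasible, hence ruling out simultaneous feasibility), since everything else is a direct transcription of inequalities. All psd pairings used are instances of the self-duality of $\sym_n^+$ already invoked in the excerpt.
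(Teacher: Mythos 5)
Parts (1) and (2) of your proposal are correct and follow the paper's route: your part (1) is just the weak-duality inequality for the pair \eqref{eq:dantzig5}--\eqref{eq:dantzig6} written out by hand (the paper simply cites weak duality), and your part (2) is identical to the paper's.

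Part (3) has a genuine gap. Your ``direct'' chain contains a false step: you write $\langle \sum_j \bar y_j A_j, X_0\rangle = \langle C - S_0, X_0\rangle$, which conflates the game solution $\bar y$ with the hypothetical dual-feasible point $y_0$ (only $\sum_j (y_0)_j A_j = C - S_0$ holds). You then retreat to the assertion that $(\bar y,\bar X)$ is ``an unbounded improving ray'' and concede you have not made this airtight. The missing ingredient is a case split, which is exactly what the paper does: from $b^T\bar y - \langle C,\bar X\rangle>0$ one concludes $b^T\bar y>0$ \emph{or} $\langle C,\bar X\rangle<0$; in the second case $\bar X$ is a recession direction of the primal with $\langle A_i,\bar X\rangle\ge 0$ and strictly decreasing objective, so $X^{\Diamond}+\lambda\bar X$ drives the primal value to $-\infty$ and weak duality forces the dual to be infeasible; in the first case $\bar y$ plays the symmetric role for the dual. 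Without the case split, ``improving ray'' is not justified, since only one of $\langle C,\bar X\rangle<0$, $b^T\bar y>0$ need hold. Alternatively, your own contradiction argument can be completed in two lines without any case split: if $X_0$ and $(y_0,S_0)$ were both feasible, then
\begin{equation*}
b^T\bar y \;\le\; \sum_i \bar y_i\,\langle A_i, X_0\rangle \;=\; \Big\langle \sum_i \bar y_i A_i,\, X_0\Big\rangle \;\le\; 0 \;\le\; \sum_j (y_0)_j\,\langle A_j,\bar X\rangle + \langle S_0,\bar X\rangle \;=\; \langle C,\bar X\rangle,
\end{equation*}
using $\bar y\ge 0$, $\sum_i\bar y_i A_i\preceq 0$, $X_0\succeq 0$ for the middle inequality and $\langle A_j,\bar X\rangle\ge 0$, $y_0\ge 0$, $\langle S_0,\bar X\rangle\ge 0$ for the last one; this contradicts $b^T\bar y>\langle C,\bar X\rangle$. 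Either repair closes the gap; as written, your part (3) does not.
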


The theorem ignores the case $\bar{t} = 0$. In the special case of bimatrix games, that exception was already
observed in Dantzig's treatment \cite{dantzig-1951-equivalence}
and overcome by Adler \cite{adler-2013} and von 
Stengel \cite{von-stengel-2022}.

While the precondition in (3) looks like a statement of 
missing strong duality, note that $(\bar{X}, \bar{y})$ does
not satisfy the constraints in the initially stated primal-dual SDP pair.
If the initial SDP pair does not have an optimal primal-dual
pair, then clearly case
 (2) can never hold. This is a 
difference to the LP case, where, say, in the case of finite optimal
values, there always exists an optimal primal-dual pair and so
case (2) is never ruled out a priori in the same way.
This qualitative difference was expected, because for a semidefinite
zero-sum game, the corresponding primal and dual feasible regions have relative
interior points
and thus there exists
an optimal primal-dual pair. So, the qualitative situation reflects
that for semidefinite zero-sum games, 
the SDPs characterizing the optimal strategies are always well behaved.

By adding the precondition that the original pair of SDPs has
primal-dual interior points, we come into the same situation that
case (2) is not ruled out a priori. 

\begin{proof}
Since 
$\bar{X}$ and $\bar{y}$ are feasible solutions of the SDPs 
in~\eqref{eq:dantzig5} and~\eqref{eq:dantzig6}, the weak 
duality theorem for semidefinite programming implies
\[
  \bar{t} ( b^T \bar{y} - \langle C, \bar{X} \rangle) \ \le \ 0.
\]
Since $\bar{t} \ge 0$, we obtain
$\bar{t} = 0$ or $b^T \bar{y} -\langle C, \bar{X} \rangle \le 0$.
In the latter case,~\eqref{eq:dantzig7} implies 
$b^T \bar{y} - \langle C, \bar{X} \rangle = 0$.
Altogether, this gives $\bar{t} (b^T \bar{y} - \langle C, \bar{X} \rangle) = 0$.

For the second statement, let $\bar{t} > 0$. Then 
$\bar{X} \bar{t}^{-1}$, $\bar{y} \bar{t}^{-1}$ and the 
corresponding slack matrix $\bar{S}$ give a feasible point
of the primal-dual SDP pair stated initially. Since
$b^T \bar{y} - \langle C, \bar{X} \rangle = 0$ and thus
$b^T (\bar{y}\bar{t}^{-1}) - \langle C, \bar{X} \bar{t}^{-1} \rangle = 0$,
this feasible point is an optimal solution.

For the third statement, let 
$b^T \bar{y} - \langle C, \bar{X} \rangle > 0$.
Then statement (1) implies $\bar{t} = 0$. Thus,
$\langle A_i, \bar{X} \rangle \ge 0$ for $1 \le i \le m$ and
$\sum_{j=1}^m \bar{y_j} A_j \preceq 0$.
Since $b^T \bar{y} - \langle C, \bar{X} \rangle > 0$,
we obtain 
$b^T \bar{y} >0$ or 
$\langle C, \bar{X} \rangle < 0$.

In the case $\langle C, \bar{X} \rangle < 0$,
assume that the originally stated primal~\eqref{eq:sdp-form3} has a feasible 
solution $X^{\Diamond}$.
Then, for any $\lambda \ge 0$, the point
$X^{\Diamond} + \lambda \bar{X}$
is a feasible solution as well.
By considering $\lambda \to \infty$, 
we see that~\eqref{eq:sdp-form3}
has optimal value $-\infty$. By the weak duality
theorem, the dual problem~\eqref{eq:sdp-form4} 
cannot be feasible.
In the case $b^T \bar{y} >0$, similar arguments show that the primal
problem is infeasible.
\end{proof}

Note that in Theorem~\ref{th:equivalence1} it is not necessary
 to assume that the constraints of
the SDP are linearly independent, since we have not expressed
the situation only in terms of the slack variable.

\begin{example}
Consider the SDP given in the primal normal form
$$\min_X \left\{ \left\langle \begin{pmatrix}
2 & 0\\
0 & 2
\end{pmatrix}, X \right\rangle :     \left\langle \begin{pmatrix}
1 & 0\\
0 & 1
\end{pmatrix}, X \right\rangle \geq 1, X\succeq 0 \right\} $$
and its dual
$$
\max_{y,S} \left\{ y:~ y \begin{pmatrix}
1 & 0\\
0 & 1
\end{pmatrix} +S = \begin{pmatrix}
2 & 0\\
0 & 2
\end{pmatrix}, \, y\geq 0, \, S\succeq 0 \right\}.
$$
One can easily verify that optimal solutions of the SDP in primal normal form are matrices $X'\in \psd_2$ with $\tr(X')=1$ and the only optimal solution of the dual problem is the pair $(y',S')$, where $y'=2$ and $S'=0\in \psd_2.$
The payoff tensor $Q$ in the corresponding Dantzig game in flattened form is
$$
\left( \begin{array}{rrr|r|r} 
0 & 0 &0 & 1 &-2\\
0 &0 &0 & 1 &-2\\
0&0&0&0&0\\ \hline
-1 & -1 & 0 &0& 1\\ \hline
2&2&0&-1&0
\end{array} \right),
$$
where the rows and columns are indexed by 
$X_{11}$,$X_{22}$,$2X_{12}$,$y_1$,$t$.
To extract an optimal strategy $\diag(\bar{X},\bar{y},\bar{t})$ for this game, we observe that \eqref{eq:dantzig5} gives $\bar{x}_{11} + \bar{x}_{22} \geq \bar{t}$ and \eqref{eq:dantzig6} yields $2\bar{t}  \geq \bar{y}$.
Then \eqref{eq:dantzig7} implies $\bar{y} \geq 2(\bar{x}_{11} + 
\bar{x}_{22})$ and we obtain 
$\bar{x}_{11} + \bar{x}_{22} = \bar{t} = \frac{\bar{y}}{2}$.
By the trace condition, an optimal strategy is of the form 
$\diag(\bar{X}, \frac{1}{2},\frac{1}{4})$, 
where $\bar{X}\in \psd_2$ satisfies $\tr(\bar{X})=\frac{1}{4}$.

Since $\bar{t} > 0$, Theorem \ref{th:equivalence1} implies that
$4\bar{X}$ and $ 4\bar{y}$ are optimal solutions to the primal-dual 
SDP pair. 
\end{example}

\section{General semidefinite 
games\label{se:general-semidefinite-games}}

Now we study two-player semidefinite games without the zero-sum condition.
By Glicksberg's result \cite{glicksberg-1952} (see also Debreu \cite{debreu-1952}
and Fan \cite{fan-1952}), there always exists a Nash equilibrium
for these games. 
This is so because they are a special case of $N$-players
continuous games with continuous payoff functions
defined on convex compact Hausdorff spaces \cite{glicksberg-1952}. 
The goal of this section is to provide a characterization of
the Nash equilibria in terms of spectrahedra, see 
Theorem~\ref{th:nash-charact1}.

Recall the following representation of Nash equilibria for bimatrix games
in terms of polyhedra, as introduced by 
Mangasarian \cite{mangasarian-1964} (see also \cite{von-stengel-2002}):

\begin{definition} \label{de:mangasarian}
For an $m \times n$-bimatrix game $(A,B)$, let the
polyhedra $P$ and $Q$ be defined by
\begin{eqnarray}
  \: \quad P & = & \{(x, v) \in \R^m \times \R \, : 
  x \ge 0
  , \; 
    x^T B \le {\bf 1}^T v
  , \; {\bf 1}^T x = 1 \} \, \label{eq:p} \, , \label{eq:p1} \\
  \: \quad Q & = & \{(y, u) \in \R^n \times \R \, :  
    A y \le {\bf 1} u
 , \;
 y \ge 0
 ,
 \; {\bf 1}^T y = 1 \} \label{eq:q} \, , \label{eq:q1}
\end{eqnarray}
where $\mathbf{1}$ denotes the all-ones vector.
\end{definition}
In $P$, the inequalities $x \ge 0$ are numbered by
$1, \ldots, m$ and the inequalities $x^T B \le {\bf 1}^T v$ are
numbered by $m+1, \ldots, m+n$.
In $Q$, the inequalities $A y \le {\bf 1} u$ are numbered
by $1, \ldots, m$ 
and the inequalities $y \ge 0$ are numbered by
$m+1, \ldots, m+n$.
In this setting, a pair of mixed strategies
$(x,y) \in \Delta_1 \times \Delta_2$
is a Nash equilibrium if and only if there exist
$u, v \in \R$ such that
$(x,v) \in P$,
$(y,u) \in Q$ and for all $i \in \{1, \ldots, m+n\}$,
the $i$-th inequality of $P$ or $Q$ is binding (i.e., it holds with equality).
Here, $u$ and $v$ represent the payoffs of player~1 and player~2, respectively.
This representation allows us to study Nash equilibria in terms of pairs of points in 
$P \times Q$.

We aim at a suitable generalization of this combinatorial
characterization to the case of semidefinite games. Note that in the case of
bimatrix games, the characterization is strongly based on the finiteness of
the pure strategies; this does not hold anymore for semidefinite games.
Therefore, we start from equivalent versions of the bimatrix game 
polyhedra~\eqref{eq:p1} and~\eqref{eq:q1}, which do not use
finitely many pure strategies in their formulation. Instead, the best responses
are expressed more explicitly as a maximization problem,
\begin{eqnarray*}
  P & = & \{(x, v) \in \R^m \times \R \, : x \ge 0, \;
  \max _y \{ x^T B y \, : \, {y} \in \Delta_2 \} \le v, \;
  {\bf 1}^T x = 1 \} \, \label{eq:p2} , \\
   Q & = & \{(y, u) \in \R^n \times \R \, : y \ge 0, \; 
   \max _x \{x^T A y \, : \, x \in \Delta_1\} \le u, \;
   {\bf 1}^T y = 1 \} \label{eq:q2} .
\end{eqnarray*}

While the generalizations to semidefinite games are no longer polyhedral,
it is convenient to keep the symbols $P$ and $Q$ for the notation. 
Consider the sets
\begin{eqnarray*}
  P & = & \{(X, v) \in \sym_m \times \R \, : X \succeq 0, \;
  \max _Y \{ p_B(X,Y) \, : \, Y \in \sym_n^+, \, \tr(Y) = 1\} \le v, \;
    \label{eq:p3} \\
    & & \; \, \tr(X) = 1 \} \, , \\
  Q & = & \{(Y, u) \in \sym_n \times \R \, : Y \succeq 0, \;
   \max _X \{p_A(X,Y) \, : \, X \in \sym_m^+, \, \tr(X) = 1\} \le u,  \;
   \label{eq:q3} \\
   & & \; \,
   \tr(Y) = 1 \} .
\end{eqnarray*}

We show that $P$ and $Q$ are spectrahedra in the spaces
$\sym_m \times \R$ and $\sym_n \times \R$.
 Similar to the considerations in the zero-sum case, for a fixed
 $X$, the expression $\max \{ p_B(X,Y) \, : \, Y \in \sym_n^+, \, \tr(Y) = 1\}$
 can be rewritten as
 \begin{eqnarray*}
 &  & - \min \{- p_B(X,Y) \ : \ Y \in \sym_n^+, \, \tr(Y) = 1 \} \\
 & = & - \min \{ \langle ( \langle X, - B_{\cdot \cdot k,l} \rangle )_{1 \le k,l \le n}, Y \rangle \ : \
   \tr(Y) = 1, \, Y \succeq 0\} \\
 & = & - \max\{ 1 \cdot v_1 \ : \ v_1 I_n + T 
        = (\langle X, -B_{\cdot \cdot k,l} \rangle )_{1 \le k,l \le n}, \
          T \succeq 0, \ v_1 \in \R \} \\
& = & \min \{ - v_1 \ : \ v_1 I_n + T 
        = (\langle X, -B_{\cdot \cdot k,l} \rangle )_{1 \le k,l \le n}, \
          T \succeq 0, \ v_1 \in \R \} \\
& = & \min \{ v_1 \ : \ - v_1 I_n + T 
        = (\langle X, -B_{\cdot \cdot k,l} \rangle )_{1 \le k,l \le n}, \
          T \succeq 0, \ v_1 \in \R \}.
\end{eqnarray*} 
Here, the minimum is attained, since the feasible set in the first line of the equations
is compact. The maximum in the third line is attained due to the strong duality theorem
for semidefinite programming and using that the feasible set in the first two lines
($\{Y \ : \ Y \succeq 0, \ \tr(Y) = 1$\}) has a strictly interior point and thus satisfies Slater's condition.
Hence,
 \begin{eqnarray*}
  P & = & \{(X, v) \in \sym_m \times \R \, : \\
  & & X \succeq 0, \; 
     \min_{T, \ v_1} \{ v_1 \, : \, - v_1 I_n + T = (\langle X,-B_{\cdot\cdot kl}  \rangle)_{1 \le k,l \le n}, \, T \succeq 0, v_1 \in \R \} 
     \ \le \ v, \\
  & & \tr(X) = 1 \} \, \label{eq:p4} \, , \\
   Q & = & \{(Y, u) \in \sym_n \times \R \, : \\
   & & Y \succeq 0, \;
   \min_{S, u_1} \{ u_1 \, : \, - u_1 I_n + S = (\langle -A_{ij\cdot\cdot},
    Y \rangle)_{1 \le i,j \le n}, \, S \succeq 0, u_1 \in \R \} 
   \ \le \ u, \,
   \\
   & & \tr(Y) = 1 \} \label{eq:q4} \, .
\end{eqnarray*}
If the $\min$-problem inside $P$ has some feasible solution 
$(v_1,T)$ then for any $v_1' \ge v_1$, there exists a feasible solution
$(v_1',T')$ as well. Namely, set $T':=T+(v_1'-v_1) I_n \succeq 0$.
Thus we have
\begin{eqnarray*}
  P & = & \{(X, v) \, : \,
  X \succeq 0, \, 
     -v I_n + T = (\langle X,-B_{\cdot\cdot kl}  \rangle)_{1 \le k,l \le n}, \, T \succeq 0, \, \tr(X) = 1 \} \, \label{eq:p5} \, , \\
   Q & = & \{(Y, u) \, : \,
   Y \succeq 0, \,
   -u I_n + S = (\langle -A_{ij\cdot\cdot}, Y \rangle)_{1 \le i,j \le n}, \, S \succeq 0, \,
   \tr(Y) = 1 \} \label{eq:q5} \, .
\end{eqnarray*}
We claim that $P$ and $Q$ are spectrahedra in the spaces $\sym_m \times \R$ and
$\sym_n \times \R$.
 For $P$, the inequalities and equations are given by
\begin{equation*}
   (\langle X,-B_{\cdot\cdot kl}  \rangle)_{1 \le k,l \le n} + v I_n \succeq 0, 
   \; \, X \succeq 0, \; \, \tr(X)  = 1,
\end{equation*}
where the equation can be written as
two inequalities and where we can combine all the scalar inequalities
and matrix inequalities into one block matrix inequality.
The spectrahedra $P$ and $Q$
can be used to provide the following characterization of Nash equilibria
in terms of a pair of projections of spectrahedra. We build on the
terminology from the bimatrix situation after 
Definition~\ref{de:mangasarian} and describe Nash equilibria
together with their payoffs.

\begin{thm}
\label{th:nash-charact1} 
A quadruple $(X,Y,u,v)$ represents a Nash equilibrium of the
semidefinite game if and only if 
\begin{enumerate}
\item $(X,v) \in P$,
\item $(Y,u) \in Q$,
\item and in every finite rank-1 decomposition
of $X$ and $Y$,
\[
  X \ = \ \sum_s \lambda_s p^{(s)} (p^{(s)})^T, \quad Y \ = \ \sum_t \mu_t q^{(t)} (q^{(t)})^T
\]
with $\lambda_s, \mu_t > 0$, $\tr(p^{(s)} (p^{(s)})^T) = 1$, $\tr(q^{(t)} (q^{(t)})^T) = 1$
and $\sum_s \lambda_s = \sum_t \mu_t = 1$, we have
\begin{eqnarray}
  \label{eq:charact1}
  \langle ( \langle X, B_{\cdot \cdot kl} \rangle)_{1 \le k,l \le n}, q^{(t)} (q^{(t)})^T \rangle 
  & = & v \; \text{ for all $t$} \\
  \label{eq:charact2}
\text{and} \qquad
  \langle p^{(s)} (p^{(s)})^T, ( \langle A_{ij \cdot \cdot}, Y \rangle)_{1 \le i,j \le m} \rangle & = & u
  \; \text{ for all $s$}.
\end{eqnarray}
\end{enumerate}
\end{thm}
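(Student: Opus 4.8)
The plan is to reduce the semidefinite characterization to the bimatrix characterization along every fixed pair of rank-$1$ decompositions, exactly mirroring the classical argument for Mangasarian's polyhedra. Fix a quadruple $(X,Y,u,v)$ and recall that $u$ and $v$ are, by the definition of $P$ and $Q$, upper bounds for the best-response payoffs: $(X,v)\in P$ means $v\ge\max_Y p_B(X,Y)$ and hence in particular $v\ge p_B(X,Y')$ for the specific $Y'=Y$, and symmetrically $(Y,u)\in Q$ gives $u\ge p_A(X,Y)$. So conditions (1) and (2) already encode the inequality halves of the Nash conditions. The content of (3) is that equality is attained, and that it is attained \emph{componentwise} along any rank-$1$ decomposition — which is the correct generalization of ``every pure strategy in the support is a best response.''

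\textbf{Key steps.}
First I would establish the ``only if'' direction: assume $(X,Y,u,v)$ is a Nash equilibrium, so $p_A(X,Y)=\max_{X'}p_A(X',Y)$ and $p_B(X,Y)=\max_{Y'}p_B(X,Y')$. Setting $u:=p_A(X,Y)$, $v:=p_B(X,Y)$ gives $(X,v)\in P$ and $(Y,u)\in Q$ immediately. For (3), fix a decomposition $X=\sum_s\lambda_s p^{(s)}(p^{(s)})^T$. Since each $p^{(s)}(p^{(s)})^T\in\cX$, the best-response optimality of $X$ against $Y$ gives $\langle p^{(s)}(p^{(s)})^T,(\langle A_{ij\cdot\cdot},Y\rangle)_{i,j}\rangle\le u$ for every $s$; taking the convex combination $\sum_s\lambda_s(\cdots)=p_A(X,Y)=u$ forces every term to equal $u$ (a convex combination equals its maximum only if all terms equal it). The same argument on the $q^{(t)}$ side gives \eqref{eq:charact1}. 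Conversely, suppose (1)--(3) hold. From (1), $v\ge\max_{Y'}p_B(X,Y')\ge p_B(X,Y)$; and $p_B(X,Y)=\sum_t\mu_t\langle(\langle X,B_{\cdot\cdot kl}\rangle)_{k,l},q^{(t)}(q^{(t)})^T\rangle=\sum_t\mu_t v=v$ by \eqref{eq:charact1}, so $v=p_B(X,Y)=\max_{Y'}p_B(X,Y')$, i.e., $Y$ is a best response to $X$. Symmetrically \eqref{eq:charact2} yields $u=p_A(X,Y)=\max_{X'}p_A(X',Y)$, so $X$ is a best response to $Y$. Hence $(X,Y)$ is a Nash equilibrium with payoffs $u,v$.

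\textbf{Main obstacle.}
The delicate point is the logical role of the quantifier ``in \emph{every} finite rank-$1$ decomposition'' in (3). For the ``only if'' direction one needs it for every decomposition, and the argument above indeed works verbatim for each one, so no issue arises. For the ``if'' direction one only needs \emph{one} decomposition (every positive semidefinite trace-$1$ matrix has one, by the spectral decomposition), and the displayed equalities for that single decomposition suffice to collapse the convex combination; so the ``if'' direction is actually the easier half. The real subtlety to handle carefully is that the best response value $\max_{Y'}p_B(X,Y')$ equals $\max$ over rank-$1$ matrices $q q^T$ with $\|q\|^2=1$ — this is precisely the second-viewpoint observation on pure strategies from Section~\ref{se:semidefinite-games}, that extreme points of $\cY$ are rank-$1$ — and must be invoked so that testing against all $q^{(t)}$ in a decomposition is equivalent to testing against all of $\cY$. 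I expect no genuine difficulty beyond assembling these pieces and being scrupulous that ``payoff of a convex combination equals the max iff each summand does'' is applied with strictly positive weights $\lambda_s,\mu_t$, which is guaranteed by the hypothesis in (3).
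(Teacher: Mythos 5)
Your proposal is correct and follows essentially the same route as the paper's proof: in both directions one expands the payoff along a rank-$1$ decomposition with strictly positive weights, uses that each rank-$1$ summand is itself a feasible strategy bounded by the best-response value, and concludes by the squeeze ``a convex combination attains the maximum iff every summand does.'' Your observation that the converse needs only one decomposition (supplied by the spectral decomposition) matches the remark the authors make immediately after the theorem.
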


The positivity condition on $\lambda_i, \mu_j$ reflects the binding property of 
$x_i \ge 0$ or $y_j \ge 0$ from the bimatrix situation.
Moreover, in the bimatrix situation the inequalities are induced by the pure strategies,
which are the extreme points of $\Delta_1$ and $\Delta_2$.
In the semidefinite situation, we can associate with each extreme point of the strategy space
an inequality, namely the inequality (say, for $P$ and an extreme point $Y$ of the strategy space
of the second player)
\[
  p_B(X,Y)  \ \le \ v.
\]

\begin{proof}
Let $(X,Y,u,v)$ represent a Nash equilibrium. Then $X$ is a best response of $Y$ and $Y$ is
a best response of $X$, so that by definition of $P$ and $Q$, we have $(X,v) \in P$ and
$(Y,u) \in Q$.
Let $\sum_s \lambda_s p^{(s)} (p^{(s})^T$ be a finite rank 1-decomposition of $X$
with $\lambda_s > 0$, $\tr(p^{(s)} (p^{(s)})^T) = 1$ and
$\sum_s \lambda_s = 1$. Then
\begin{eqnarray*}
  p_A(X,Y) & = & \langle X, \langle (A_{ij \cdot \cdot }, Y \rangle)_{1 \le i,j \le m} \rangle \\
    & = & \sum_{s} \lambda_s \langle p^{(s)} (p^{(s)})^T, (\langle A_{ij \cdot \cdot }, Y \rangle)_{1 \le i,j \le m} \rangle.
\end{eqnarray*}
Since the first player's payoff is $u$, the best response property gives 
$p_A(p^{(s)} (p^{(s)})^T, Y) \le u$. If one of 
$p^{(s)} (p^{(s)})^T$ had $p_A(p^{(s)} (p^{(s)})^T, Y) < u$, then, 
since $X$ is a convex combination,
we would have $p_A(X,Y) < u$, a contradiction.
The statement on $Y$ follows similarly.

Conversely, let $(X,v) \in P$,
$(Y,u) \in Q$ and in every finite rank-1 decomposition
$
  X \ = \ \sum_s \lambda_s p^{(s)} (p^{(s)})^T$, 
$Y \ = \ \sum_t \mu_t q^{(t)} (q^{(t)})^T
$
with $\lambda_s, \mu_t > 0$, $\tr(p^{(s)} (p^{(s)})^T) = 1$, $\tr(q^{(t)} (q^{(t)})^T) = 1$
and $\sum_s \lambda_s = \sum_t \mu_t = 1$, we have~\eqref{eq:charact1} and~\eqref{eq:charact2}.
Since $(Y,u) \in Q$, we have $u^*:=\max \{ p_A(X,Y) \ : \ X \in \sym_m^+,  \ \tr(X) = 1\} \le u$. 
Due to~\eqref{eq:charact2}, this gives
$p_A(p^{(s)} (p^{(s)})^T,Y) = u$ for all $s$ and thus
$p_A(p^{(s)} (p^{(s)})^T,Y) = u^* = u$ for all $s$.
Hence, $p_A(X,Y) = u^* = u$ and $X$ is a best response to $Y$.
Similarly, $Y$ is a best response to $X$ with payoff $v$ so that altogether 
$(X,Y,u,v)$ represents a Nash equilibrium.
\end{proof}

\begin{rem}
Theorem~\ref{th:nash-charact1} also holds if 
``in every finite rank-1 decomposition'' is replaced by
``in at least one finite rank-1 decomposition". The proof also works in that
setting. We will illustrate this further below, in Example~\ref{ex:5nasheqb}.
\end{rem}

Compared to bimatrix games, the more general situation of semidefinite
games is qualitatively different in the following sense. In a bimatrix game,
every mixed strategy is a unique convex combination of the pure strategies.
In a semidefinite game, the analogs of pure strategies are the 
rank 1-matrices in the spectraplex and the decompositions of the mixed
(i.e., of rank at least 2)
strategies as convex combinations of rank-1 matrices are no longer unique.
However, the situation for a Nash equilibrium to have several decompositions is quite restrictive.

\begin{rem}
\label{re:decomposition-rank1}
To obtain a decomposition of a positive semidefinite matrix with 
trace~1 into the sums of
positive semidefinite rank 1-matrices with trace~1, one can proceed
as follows. Consider a  spectral decomposition. Then replace the 
eigenvalues (which sum to 1) by eigenvalues 1 and interpret the
 original eigenvalues as coefficients of a convex combination.
 \end{rem}

\begin{example}
We consider the example of a hybrid game, where the first player plays 
a strategy in the simplex
$\Delta_2$ and the second player plays a strategy in $\sym_2^+$
with trace~1; note that the hybrid game can be encoded into a semidefinite
game on $\sym_2 \times \sym_2$ by setting $A_{ijkl} = B_{ijkl} = 0$
whenever $i \neq j$.
We can describe the situation in terms of an index $i$ for the
first player and the indices $(j,k)$ for the second player.
For $i=1$, let
\[
  (A_{1jk})_{1 \le j,k \le 2} \ = \ (B_{1jk})_{1 \le j,k \le 2} = \left( \begin{array}{cc}
    1 & \varepsilon \\
    \varepsilon & 0
    \end{array} \right),
\]
and for $i=2$, let
\[
  (A_{2jk})_{1 \le j,k \le 2} \ = \ (B_{2jk})_{1 \le j,k \le 2} = \left( \begin{array}{cc}
    0 & \varepsilon \\
    \varepsilon & 1
    \end{array} \right).
\]
To determine the Nash equilibria, we consider three cases:

\emph{Case 1: The first player plays the first pure strategy.}
Then the second player has payoff 
$y_{11} + 2 \varepsilon y_{12}$.
A small computation shows that for small $\varepsilon$,
the second player's best response is
\[
  \frac{1}{2\sqrt{4 \varepsilon^2+1}}
  \left( \begin{array}{cc}
    \sqrt{4 \varepsilon^2+1}+1 & 2 \varepsilon \\
    2 \varepsilon & \sqrt{4 \varepsilon^2+1}-1
  \end{array} \right),
\]
and indeed, this gives a Nash equilibrium.

\emph{Case 2: The first player plays the second pure strategy.}
Analogously, the second player's best response is
\[
  \frac{1}{2\sqrt{4 \varepsilon^2+1}}
  \left( \begin{array}{cc}
    \sqrt{4 \varepsilon^2+1}-1 & 2 \varepsilon \\
    2 \varepsilon & \sqrt{4 \varepsilon^2+1}+1
  \end{array} \right),
\]
and indeed, this gives a Nash equilibrium.

\emph{Case 3: The first player plays a totally mixed strategy.}
Let $x=(x_1,x_2) = (x_1,1-x_1) \in \Delta_2$ with $x_1,x_2 > 0$ and $x_1+x_2=1$.
The second player's best response has the payoff
\begin{eqnarray*}
& & \max \{x_1 y_{11} + 2 \varepsilon x_1 y_{12} + (1-x_1) y_{22} + 2 \varepsilon (1-x_1) y_{12} 
  \ : \ Y \succeq 0, \, \tr(Y) = 1 \} \\
& = & \max \{x_1 y_{11} + (1-x_1) y_{22} + 2 \varepsilon y_{12}
  \ : \ Y \succeq 0, \, \tr(Y) = 1 \}.
\end{eqnarray*}
The first pure strategy would give for the first player
$
  y_{11} + 2 \varepsilon y_{12}
$
and the second pure strategy would give for the first player
$
   y_{22} + 2 \varepsilon y_{12}.
$
If $(x,Y)$ is a Nash equilibrium such that $x$ is a totally mixed
strategy, we must have equality, that is,
$
  y_{11} + 2 \varepsilon y_{12} 
  \ = \ y_{22} + 2 \varepsilon y_{12}.
$
Hence, $y_{11} = y_{22}$ and the payoff of the second player is
\[
 \max \{ x_1 y_{11} + (1-x_1) y_{11} + 2 \varepsilon y_{12} \} \\
 \ = \ \max \{ y_{11} + 2 \varepsilon y_{12} \},
\]
which has become independent of $x_1$.
The payoff of the second player is maximized for the value 
$y_{12} = \sqrt{y_{11} y_{22}} = y_{11}$. Thus, the payoff for the second player is
\[
  \max \{y _{11} + 2 \varepsilon y_{11}\} \\
  \ = \ \max \{ (1+2 \varepsilon) y_{11} \}.
\]
Hence, for every non-negative $\varepsilon$, the best response of the second
player is
\begin{equation}
  \label{eq:onehalfs}
  \left( \begin{array}{cc}
    1/2 & 1/2 \\
    1/2 & 1/2
  \end{array} \right).
\end{equation}
As apparent from the above considerations, in that case both 
pure strategies of the first player
are best responses of the second player.
To determine the strategy $x$ of the first player, we use the
condition that
the maximum
$\max_{Y \in \mathcal{Y}} \{x_1 y_{11} + (1-x_1) y_{22} + 2 \varepsilon
  \sqrt{y_{11} y_{22}}\}$ has to be attained at the matrix \eqref{eq:onehalfs}.
  Substituting $y_{22} = 1-y_{11}$, the resulting univariate problem in 
  $y_{11}$
  gives $x = (1/2,1/2)$.
 The payoff is 
$\frac{1}{2} + \varepsilon$ for both players.
\end{example}

We close the section by mentioning that some classical results
for bimatrix games remain true for semidefinite 
games. Since semidefinite games are convex compact games, 
the generalized Kohlberg-Mertens structure theorem on the
Nash equilibria shown by Predtetchinski \cite{predtetchinski-2009}
holds for semidefinite games (for related recent structural
results in the context of polytopal games see \cite{pahl-2023}).
Moreover, generically,
the number of Nash equilibria in semidefinite games is finite and 
odd, as a consequence of the results of Bich and 
Fixary~\cite{bich-fixary-2021}.

\section{Semidefinite games with many Nash equilibria\label{se:many-nash}}

We construct a family of semidefinite games on the strategy space
$\sym_n \times \sym_n$ such that the set of Nash equilibria has
many connected components. In particular, the number
of Nash equilibria is larger than the number of Nash equilibria
that an $n \times n$-bimatrix game can have.

The following criterion allows us to construct semidefinite games from 
bimatrix games that contain the Nash equilibria of the bimatrix games
and possibly additional ones.

\begin{lemma}
\label{le:preserve-nash}
Let $G=(A,B)$ be an $m \times n$ bimatrix game. 
Let 
$\bar{G}=(\bar{A},\bar{B})$ be a semidefinite game on the strategy space $\sym_m \times \sym_n$
with $\bar{a}_{iikk} = a_{ik}$ and $\bar{b}_{iikk} = b_{ik} $ for $1 \le i \le m$, $1 \le k \le n$.
If $\bar{a}_{ijkk} =0$ for all $i \neq j$ and all $k$,
as well as
$\bar{b}_{iikl} =0$ for all $i$ and all $k \neq l$
then, for every Nash equilibrium
$(x,y)$ of $G$, the pair $(X,Y)$ defined by
\[
  X_{ij} = \begin{cases}
    x_i, & i = j, \\
    0,   & i \neq j,
  \end{cases}
  \qquad
  Y_{ij} = \begin{cases}
    y_i, & i = j, \\
    0, & i \neq j
  \end{cases}
\]
is a Nash equilibrium of $\bar{G}$.
\end{lemma}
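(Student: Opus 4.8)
The plan is to show that embedding a mixed strategy of a bimatrix player as the diagonal of a positive semidefinite matrix turns every deviation available in $\bar{G}$ into a deviation available in $G$ that is no more profitable, so that the two best-response inequalities defining a Nash equilibrium of $\bar{G}$ follow from those of $(x,y)$ in $G$. First I would compute the payoffs at the candidate profile: since $X=\diag(x)$ and $Y=\diag(y)$ vanish off the diagonal, the quadruple sum collapses to
\[
  p_{\bar{A}}(X,Y) \ = \ \sum_{i,k} X_{ii}\,\bar{a}_{iikk}\,Y_{kk} \ = \ \sum_{i,k} x_i\,a_{ik}\,y_k \ = \ p_A(x,y),
\]
and symmetrically $p_{\bar{B}}(X,Y)=p_B(x,y)$.

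The core step is to check that $X$ is a best response to $Y$ in $\bar{G}$. I would take an arbitrary $X'\in\cX$, i.e.\ $X'\succeq 0$ with $\tr(X')=1$. Because $Y$ is diagonal, only the slices $\bar{a}_{\cdot\cdot kk}$ contribute, so $p_{\bar{A}}(X',Y)=\sum_{i,j,k}X'_{ij}\,\bar{a}_{ijkk}\,y_k$; the hypothesis $\bar{a}_{ijkk}=0$ for $i\neq j$ then kills all off-diagonal entries of $X'$, leaving
\[
  p_{\bar{A}}(X',Y) \ = \ \sum_{i,k} X'_{ii}\,a_{ik}\,y_k \ = \ \sum_i X'_{ii}\,(Ay)_i .
\]
Now $X'\succeq 0$ forces $X'_{ii}\ge 0$ and $\tr(X')=1$ forces $\sum_i X'_{ii}=1$, so $x':=(X'_{11},\dots,X'_{mm})\in\Delta_1$, whence $p_{\bar{A}}(X',Y)=p_A(x',y)\le \max_{\tilde{x}\in\Delta_1}p_A(\tilde{x},y)=p_A(x,y)=p_{\bar{A}}(X,Y)$, where the second equality is precisely the best-response property of $x$ against $y$ in $G$.

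The argument for player~2 is the mirror image: using that $X$ is diagonal and that $\bar{b}_{iikl}=0$ for $k\neq l$, one gets $p_{\bar{B}}(X,Y')=\sum_k Y'_{kk}(x^TB)_k=p_B\bigl(x,(Y'_{kk})_k\bigr)$ with $(Y'_{kk})_k\in\Delta_2$, so $p_{\bar{B}}(X,Y')\le p_B(x,y)=p_{\bar{B}}(X,Y)$ for every $Y'\in\cY$, i.e.\ $Y$ is a best response to $X$. Combining the two inequalities shows $(X,Y)$ is a Nash equilibrium of $\bar{G}$. I do not expect a real obstacle here; the only thing requiring care is keeping track of which vanishing hypothesis is used in which direction — the vanishing of the $(ij,kk)$-block of $\bar{A}$ for player~1's deviations, of the $(ii,kl)$-block of $\bar{B}$ for player~2's — together with the elementary observation that the diagonal of a trace-one positive semidefinite matrix is automatically a probability vector, which is exactly what allows a deviation in $\bar{G}$ to be reproduced inside $\Delta_1$ or $\Delta_2$. (The remaining two vanishing conditions on $\bar{A}$ and $\bar{B}$ are not needed for this particular statement.)
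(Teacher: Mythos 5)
Your proof is correct and follows essentially the same route as the paper's: both arguments collapse the payoff sum using the diagonality of $X,Y$ and the vanishing hypotheses, and then observe that the diagonal of a trace-one positive semidefinite deviation is a probability vector, reducing any deviation in $\bar{G}$ to one in $G$. The paper phrases this as a proof by contradiction while you argue directly, and you are also right that only two of the four vanishing conditions are actually used; these are presentational differences, not a different method.
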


\begin{proof}
Assume $(X,Y)$ is not a Nash equilibrium of $\bar{G}$. W.l.o.g. we can assume that a strategy $Y'$ exists for the second player with 
$p_{\bar{B}}(X,Y)<p_{\bar{B}}(X,Y')$.
This yields
\[
p_{\bar{B}}(X,Y)=\sum_{i,j,k,l} \bar{b}_{ijkl} X_{ij}Y_{kl} = 
\sum_{i,k} \bar{b}_{iikk} X_{ii}Y_{kk}
< \sum_{i,k} \bar{b}_{iikk} X_{ii}Y'_{kk} = p_{\bar{B}}(X,Y'),
\]
where the last equation uses that $\bar{b}_{iikl} = 0$ for all $l \neq k$.
Hence, there exists a feasible strategy $y'$ for the second player in $G$
with $p_B(x,y) < p_B(x,y')$. This contradicts the precondition that $(x,y)$
is a Nash equilibrium in $G$.
\end{proof}

In Lemma~\ref{le:preserve-nash}, besides the Nash equilibria inherited from
the bimatrix game, there can be additional Nash equilibria in the semidefinite
games. In a $2 \times 2$-bimatrix game, there can be at most 
three
isolated Nash equilibria (see, e.g., \cite{bgt-1989} 
or \cite{QuiShy-conj-97}).
The following example provides an instance of a semidefinite 
game on the strategy space $\sym_2 \times \sym_2$
with five isolated Nash equilibria. In particular, it has more isolated Nash equilibria than
a $2 \times 2$-bimatrix game can have.

\begin{example}\label{ex:5nasheq}
For a given $c \in \R$, let
\[
  \left( \begin{array}{cc}
    A_{\cdot\cdot 11} & A_{\cdot\cdot 12} \\
    A_{\cdot\cdot 21} & A_{\cdot\cdot 22}
  \end{array} \right)
  =
  \left( \begin{array}{cc}
  \begin{pmatrix} 1 & 0 \\ 0 & 0 \end{pmatrix}
  & \begin{pmatrix} 0 & c \\ c & 0 \end{pmatrix} \\ [2ex]
  \begin{pmatrix} 0 & c \\ c & 0 \end{pmatrix}
  & \begin{pmatrix} 0 & 0 \\ 0 & 1 \end{pmatrix} 
 \end{array} \right)
\]
and $B=A$.
We claim that for $c > 1/2$, there are exactly five isolated Nash equilibria.

First consider the case that the diagonal of $X$ is $(1,0)$. Since $X \succeq 0$, this
implies $x_{12} = 0$. The best response of player~2 gives on the diagonal $(1,0)$ of $Y$.
From that, we see that
\[
  X = Y = \begin{pmatrix} 1 & 0 \\ 0 & 0 \end{pmatrix}
\]
is a Nash equilibrium with payoff 1 for both players, and similarly,
\[
  X = Y = \begin{pmatrix} 0 & 0 \\ 0 & 1 \end{pmatrix}
\]
as well. These Nash equilibria are isolated, which follows as a direct
consequence of the current case in connection with the subsequent
considerations of the cases with diagonal of $X$ not equal to $(1,0)$.
 
Now consider the situation that both diagonal entries of $X$ are positive,
and due to the situation discussed before, we can also assume that both
diagonal entries of $Y$ are positive.
Note that the payoff of each player is
\[
  p(X,Y) = x_{11} y_{11} + x_{22} y_{22} + 4c x_{12} y_{12}.
\]
In a Nash equilibrium, as soon as one player plays the non-diagonal
entry with non-zero weight, then both players will play the non-diagonal
element with maximal possible absolute value and appropriate sign, say,
for player 1, $x_{12} = \pm \sqrt{x_{11} x_{22}}$.

\smallskip

\emph{Case 1:} $x_{12} \neq 0$. We can assume
positive signs for the non-diagonal elements of both players. The payoffs are
\[
  p(X,Y) = x_{11} y_{11} + x_{22} y_{22} 
    + 4c \sqrt{x_{11} x_{22}} \sqrt{y_{11} y_{22}}.
\]
Expressing $x_{22} = 1-x_{11}$ and $y_{22} = 1 - y_{11}$, we obtain
\[
  p(X,Y) = x_{11} y_{11}+ (1-x_{11}) (1-y_{11}) 
    + 4c \sqrt{x_{11} (1-x_{11})} \sqrt{y_{11} (1-y_{11})}.
\]
In a Nash equilibrium, the partial derivatives
\begin{align*}
  p_{x_{11}} = & \ 2 y_{11} - 1 + 
  \frac{2 c \sqrt{y_{11} (1-y_{11})} (1-2 x_{11})}{\sqrt{x_{11} (1-x_{11})}}, \\
  p_{y_{11}} = \ & 2 x_{11} - 1 + 
  \frac{2 c \sqrt{x_{11} (1-x_{11})} (1-2 y_{11})}{\sqrt{y_{11} (1-y_{11})}}
\end{align*}
of $p(X,Y)$ necessarily must vanish. We remark that, since the payoff 
function is bilinear, non-infinitesimal deviations are not relevant here.\\
For the case $c > 1/2$, we obtain 
$x_{11} = y_{11} = 1/2$. For $c = 1/2$, any choice of $x_{11} \in (0,1)$ and setting $y_{11} = x_{11}$ gives a
critical point, see below.

For $x_{11} = \frac{1}{2}$ and $y_{11} = \frac{1}{2}$, we obtain the
Nash equilibria
\[
  X=Y= 
  \left( \begin{array}{cc}
  1/2 & 1/2 \\
  1/2 & 1/2
   \end{array} \right)
   \; \text{ as well as } \;
  X=Y= 
  \left( \begin{array}{cc}
  1/2 & -1/2 \\
  -1/2 & 1/2
  \end{array} \right)
\]
with payoff $\frac{1}{2} + 4 \cdot \frac{1}{4} \cdot c = \frac{1}{2} + c$ 
for both players.

\smallskip

\emph{Case 2:} $x_{12} = 0$. This implies $y_{12} = 0$, and we obtain
the isolated Nash equilibrium
\[
  X = Y =  \left( \begin{array}{cc}
  1/2 & 0 \\
  0 & 1/2
  \end{array} \right)
\]
with payoff $\frac{1}{2}$ for both players.

In the special case $c=1/2$, any choice for $x_{11} \in (0,1)$ and setting
$y_{11} = x_{11}$ gives a critical point. Further inspecting the 
second derivatives
\[
    p_{x_{11} x_{11}} \big|_{y_{11} = x_{11}} \ = \
       - \frac{1}{2 x_{11}(1-x_{11})}  \: \text{ and } \:
    p_{y_{11} y_{11}} \big|_{y_{11} = x_{11}} \ = \
       - \frac{1}{2 x_{11}(1-x_{11})} \, ,
\]
the negative values show that the points are all local maxima w.r.t. deviating
from $x_{11}$ (and, analogously, from $x_{22}$). Hence,
in case $c=1/2$, all the points with $x=y$ for $x \in (0,1)$ and choosing the
maximal possible off-diagonal entries (with appropriate sign with respect
to the other player) give a family of Nash equilibria with payoff~1 for
each player.
\end{example}

\begin{example}\label{ex:5nasheqb}
We can use the Example~\ref{ex:5nasheq} also to illustrate a situation,
where there exists more than one decompositions of a strategy 
into rank-1 matrices. Consider again the main situation $c > \frac{1}{2}$.
We have seen that the pair
$(\frac{1}{2} I_2, \frac{1}{2} I_2)$ of scaled identity matrices
constitutes a Nash equilibrium. If player~1 plays 
$X=\frac{1}{2} I_2$, the payoff of player~2 is
\[
  p_B(X,Y) = \frac{1}{2} y_{11} + \frac{1}{2} y_{22} + 0 \cdot c \, ,
\]
which is independent of $c$. Due to $\tr(Y) = 1$, the payoff 
is~$\frac{1}{2}$ 
for any strategy $Y$ of the second player.
Note that the unit matrix has several decompositions into rank 1-matrices.
Besides the canonical decomposition
\[
  I_2 \ = \ e^{(1)} (e^{(1)})^T + e^{(2)} (e^{(2)})^T,
\]
we can also consider, say, even for a general unit matrix $I_n$, 
the decomposition
\[
  I_n \ = \ \sum_{k=1}^n (u^{(k)}) (u^{(k)})^T
\]
for any orthonormal basis $u^{(1)}, \ldots, u^{(n)}$ of $\R^n$.
Both for the canonical decomposition and for the decomposition, say,
with $u^{(1)} = (\cos \alpha, \sin \alpha)^T$, 
$u^{(2)} = (-\sin \alpha,  \cos \alpha)^T$ for $\alpha:=\frac{\pi}{6}$, 
i.e., $u^{(1)} = \frac{1}{2}(\sqrt{3},1)^T$,
$u^{(2)} = \frac{1}{2}(-1,\sqrt{3})^T$,
we obtain $p_B(X,Y)=\frac{1}{2}$. In particular, all the rank 1-strategies
occurring in the various decompositions of $\frac{1}{2}I_2$ are best responses
of player~2 to the strategy $\frac{1}{2} I_2$ of the first player.
\end{example}

We now show how to construct from Example~\ref{ex:5nasheq} 
an explicit family of semidefinite games with many Nash equilibria.

\smallskip

\noindent
\emph{Block construction.}
Let $A^{(1)}$ and $A^{(2)}$ be tensors of size
$m_1 \times m_1 \times n_1 \times n_1$ and
$m_2 \times m_2 \times n_1 \times n_1$.
The \emph{block tensor with blocks} $A^{(1)}$
and $A^{(2)}$ is formally defined as the tensor of size
\[
  (m_1 + m_2) \times
  (m_1 + m_2) \times
  (n_1 + n_2) \times
  (n_1 + n_2),
\]
which has entries
\[
  \begin{array}{rclll}
  a_{ijkl} & = & a^{(1)}_{ijkl} & \text{ for all } & 1 \le i,j \le m_1, \,
    1 \le k,l \le n_1, \\
 a_{i+m_1, j+m_1,k+n_1,l+n_1}
 & = & a^{(2)}_{ijkl} & \text{ for all } & 1 \le i,j \le m_2, \,
    1 \le k,l \le n_2.
  \end{array}
\]
Naturally, this construction can be extended to more than two blocks.

For $1 \le k \le 2$, let $(A^{(k)}, B^{(k)})$ be a semidefinite game
$G^{(k)}$ with strategy space $\sym_{m_k} \times \sym_{n_k}$.
Then the \emph{block game} $G=(A,B)$, where $A$ is the
block tensor with blocks $A^{(1)}$ and $A^{(2)}$ and
$B$ is the block tensor with blocks $B^{(1)}$ and $B^{(2)}$, defines
a semidefinite game with strategy space
$\sym_{m_1 + m_2} \times
 \sym_{n_1 + n_2}$.

\begin{lemma}[Block lemma]
\label{le:block-constr}
For $1 \le k \le 2$, let $G^{(k)}=(A^{(k)}, B^{(k)})$ be a semidefinite game
with strategy space $\sym_{m_k} \times \sym_{n_k}$
and $(X^{(k)}, Y^{(k)})$ be a Nash equilibrium of $G^{(k)}$
and denote the payoffs of the two players by $p_{A^{(k)}}$
and $p_{B^{(k)}}$. If $\alpha_1,\alpha_2,\beta_1,\beta_2 \ge 0$
satisfy
$\alpha_1 + \alpha_2 = 1, \beta_1 + \beta_2 = 1$ as well as
\[
  \alpha_1 p_{B^{(1)}}(X^{(1)},Y^{(1)}) 
  = \alpha_2 p_{B^{(2)}}(X^{(2)},Y^{(2)}) \text{ and }
  \beta_1 p_{A^{(1)}}(X^{(1)},Y^{(1)}) 
  = \beta_2 p_{A^{(2)}}(X^{(2)},Y^{(2)}),
\]
then
\[
  X^* := \begin{pmatrix}
    \alpha_1 X^{(1)} & 0 \\
    0 & \alpha_2 X^{(2)}
  \end{pmatrix}, \quad
  Y^* := \begin{pmatrix}
    \beta_1 Y^{(1)} & 0 \\
    0 & \beta_2 Y^{(2)}
  \end{pmatrix}
\]
is a Nash equilibrium of the block game of
$(A^{(1)}, B^{(1)})$ and $(A^{(2)}, B^{(2)})$.
\end{lemma}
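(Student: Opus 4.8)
The plan is to verify the Nash equilibrium property directly from the definitions, exploiting the block-diagonal structure of the payoff tensors. First I would record the key computation that, because $A$ and $B$ are block tensors with zero entries outside the two diagonal blocks, the payoff of the block game on block-diagonal strategies $\diag(X_1, X_2)$ and $\diag(Y_1, Y_2)$ splits: $p_A(\diag(X_1,X_2), \diag(Y_1,Y_2)) = p_A^{(1)}(X_1,Y_1) + p_A^{(2)}(X_2,Y_2)$, and similarly for $p_B$. Moreover, if the second player deviates to an arbitrary $Y' \in \sym_{n_1+n_2}$ with $\tr(Y')=1$ and $Y' \succeq 0$, only the two diagonal blocks $Y'_{11} \in \sym_{n_1}^+$ and $Y'_{22} \in \sym_{n_2}^+$ enter $p_B$, with $\tr(Y'_{11}) + \tr(Y'_{22}) = 1$; write $\gamma_k := \tr(Y'_{kk})$. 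So effectively the deviation amounts to choosing a convex split $\gamma_1 + \gamma_2 = 1$ and, within each block, a trace-$\gamma_k$ positive semidefinite matrix.

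The second step is the equilibrium verification for $(X^*, Y^*)$. With $X^* = \diag(\alpha_1 X^{(1)}, \alpha_2 X^{(2)})$ fixed, the second player's payoff against any deviation $Y'$ as above is, by bilinearity, $\alpha_1 p_B^{(1)}(X^{(1)}, \gamma_1^{-1} Y'_{11}) \cdot \gamma_1 \cdot$ (rescaling appropriately) — more cleanly: $p_B(X^*, Y') = \sum_k \alpha_k \, p_B^{(k)}(X^{(k)}, Y'_{kk})$ where $Y'_{kk}$ has trace $\gamma_k$. Since $(X^{(k)}, Y^{(k)})$ is a Nash equilibrium of $G^{(k)}$, the best response of player~2 within block $k$ against $X^{(k)}$ over trace-1 matrices yields value $p_B^{(k)} := p_B^{(k)}(X^{(k)}, Y^{(k)})$; by homogeneity the best over trace-$\gamma_k$ matrices is $\gamma_k p_B^{(k)}$. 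Hence $p_B(X^*, Y') \le \sum_k \alpha_k \gamma_k p_B^{(k)}$, to be maximized over $\gamma_1 + \gamma_2 = 1$, $\gamma_k \ge 0$. The balancing condition $\alpha_1 p_B^{(1)} = \alpha_2 p_B^{(2)}$ makes the two coefficients $\alpha_k p_B^{(k)}$ equal, so the maximum is attained for every choice of $(\gamma_1, \gamma_2)$ — in particular at $\gamma_k = \beta_k$, which corresponds to $Y' = Y^*$. Thus $Y^*$ is a best response to $X^*$, and symmetrically (using $\beta_1 p_A^{(1)} = \beta_2 p_A^{(2)}$) $X^*$ is a best response to $Y^*$. (I note the hypotheses as stated read $\alpha_1 p_B^{(1)} = \alpha_2 p_B^{(2)}$ and ``$\beta_2 p_A^{(1)} = \beta_2 p_A^{(2)}$'', the latter evidently a typo for $\beta_1 p_A^{(1)} = \beta_2 p_A^{(2)}$; I would use the corrected form.)

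For the last sentence, consider $(X^*, 0)$, where ``$0$'' denotes the strategy $Y^* = 0$ is not trace-1, so this must mean the degenerate block allocation where one block gets weight $0$; concretely $(X^*, 0)$ should be read as a Nash equilibrium pair in which, e.g., the first player plays $X^*$ and the second concentrates entirely on one block, or — more likely in context — it abbreviates the equilibria obtained by taking $\beta_1 \in \{0,1\}$. The verification is the same computation with one $\gamma_k = 0$: the coefficient-equality argument still shows that putting all mass on either block is a best response, so these boundary allocations are again equilibria. I would spell out that for the pair with second player's weight vector $(\beta_1,\beta_2) = (1,0)$ the payoff comparison reduces to block~1's equilibrium condition together with $p_B^{(1)} \ge 0 = $ (contribution available from block~2 under $X^*$ restricted there), which holds because the relevant best-response value in block~2 is $\alpha_2 p_B^{(2)} = \alpha_1 p_B^{(1)}$, matching.

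The main obstacle is purely bookkeeping: keeping straight the three separate scalings — the equilibrium weights $\alpha_k, \beta_k$ baked into $X^*, Y^*$; the free trace-split $\gamma_k$ available to a deviating player; and the homogeneity rescaling that converts a trace-$\gamma_k$ matrix into a trace-$1$ one — and confirming that after all rescalings the coefficient of $\gamma_k$ in the deviation payoff is exactly $\alpha_k p_B^{(k)}$ (resp. $\beta_k p_A^{(k)}$), so that the stated balancing conditions are precisely what forces every split to be optimal. Once that identity is pinned down, every assertion of the lemma follows immediately. I would also take care to justify that an off-block-diagonal deviation never helps, which is immediate since the payoff tensors annihilate all off-block entries.
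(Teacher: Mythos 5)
Your proof is correct and follows essentially the same route as the paper's: fix $X^*$, reduce an arbitrary deviation of the second player to its diagonal blocks with trace split $\gamma_1+\gamma_2=1$, bound each block's contribution by $\alpha_k\gamma_k p_B^{(k)}$ via homogeneity and the best-response property in $G^{(k)}$, and use the balancing condition $\alpha_1 p_B^{(1)} = \alpha_2 p_B^{(2)}$ to conclude that every split (in particular $(\beta_1,\beta_2)$) is optimal. You actually supply more detail than the paper, which only tests deviations supported on a single block and leaves the general-deviation reduction implicit, and you correctly flag the typo ``$\beta_2 p_A^{(1)} = \beta_2 p_A^{(2)}$'' as well as the literal ill-posedness of the final claim about $(0,Y^*)$ and $(X^*,0)$, which the paper's proof does not address at all.
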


Note that if one of the coefficients $\alpha_1,\alpha_2,\beta_1$ or 
$\beta_2$ is zero in the theorem, then one of the blocks in $X^*$
or $Y^*$ consists solely of zeroes.

\begin{proof} We denote the payoff tensors of the block game
by $A$ and $B$. Let the first player play 
$X^*$. Since $\alpha_1 + \alpha_2 = 1$,
$X^*$ is indeed an admissible strategy of the first player.
If the second player plays a strategy $\bar{Y}$,
we can assume that it is of the form
\[
  \bar{Y} = \begin{pmatrix}
    \gamma_1 \bar{Y}^{(1)} & 0 \\
    0 & \gamma_2 \bar{Y}^{(2)}
  \end{pmatrix}
\]
with some $\gamma_1, \gamma_2 \ge 0$,
$\gamma_1 + \gamma_2 = 1$ 
and strategies $\bar{Y}^{(1)}$, $\bar{Y}^{(2)}$ of
$G^{(1)}$ and $G^{(2)}$.
Since $(X^{(k)}, Y^{(k)})$ is a Nash equilibrium of $G^{(k)}$ for
$k \in \{1,2\}$, we obtain
\begin{eqnarray*}
  p_B(X^*,\bar{Y}) & = & \alpha_1 \gamma_1
  p_{B^{(1)}}(X^{(1)},\bar{Y}^{(1)})
  + \alpha_2 \gamma_2 p_{B^{(2)}}(X^{(2)}, \bar{Y}^{(2)}) \\
  & \le & \alpha_1 \gamma_1 p_{B^{(1)}}(X^{(1)},Y^{(1)})
  + \alpha_2 \gamma_2 p_{B^{(2)}}(X^{(2)}, Y^{(2)}) \\
  & = & (\gamma_1 + \gamma_2) \alpha_1 p_{B^{(1)}}(X^{(1)}, Y^{(1)}) \\
  & = & (\beta_1 +\beta_2) \alpha_1 p_{B^{(1)}}(X^{(1)}, Y^{(1)}) \\
  & = &  \alpha_1 \beta_1 p_{B^{(1)}}(X^{(1)}, Y^{(1)}) 
        + \alpha_2 \beta_2 p_{B^{(2)}}(X^{(2)}, Y^{(2)}) \\
  & = & p_B(X^*,Y^*) \, .
\end{eqnarray*}
An analogous argument holds for the best response of the first player
to  the strategy $Y^*$ of the second player.
\end{proof}

We can use the block lemma to construct a family of semidefinite
games with many Nash equilibria.

\begin{example}
\label{ex:many-nash1}
Let $m=n$, i.e., we consider a game $G_n$ on 
$\sym_n \times \sym_n$. Assume that $n$ is even.
We generalize 
Example~\ref{ex:5nasheq}. For a given $c \in \R$, let
\[
  \left( \begin{array}{cc}
    A_{\cdot\cdot 11} & A_{\cdot\cdot 12} \\
    A_{\cdot\cdot 21} & A_{\cdot\cdot 22}
  \end{array} \right)
  =
  \left( \begin{array}{cc}
  \begin{pmatrix} 1 & 0 \\ 0 & 0 \end{pmatrix}
  & \begin{pmatrix} 0 & c \\ c & 0 \end{pmatrix} \\ [2ex]
  \begin{pmatrix} 0 & c \\ c & 0 \end{pmatrix}
  & \begin{pmatrix} 0 & 0 \\ 0 & 1 \end{pmatrix} 
 \end{array} \right)
\]
as in Example~\ref{ex:5nasheq}.
For $1 < s < n/2$ and $2s-1 \le \{i,j,k,l\} \le 2s$, let
\[
  A_{ijkl} \ = \ A_{i-2(s-1),j-2(s-1),k-2(s-1),l-2(s-1)}
\]
and let all other entries of $A$ be zero. Also, let $B=A$.

The discussion in Example~\ref{ex:5nasheq}, for the specific situation of the
game on the strategy sets $\sym_2 \times \sym_2$, 
implies that there are five Nash equilibria. In all these equilibria, the strategies
of both players coincide and this property is preserved 
throughout the generalized construction we present.
\end{example}

\begin{thm}
\label{th:nashnumber}
The set of Nash equilibria of $G_n$ consists of
\[
  \sqrt{6}^n - 1 \ \approx \  2.449^n - 1
\]
connected components.
\end{thm}

\begin{proof}
Using the Block Lemma~\ref{le:block-constr}, we obtain
$
  6^{n/2} - 1 = \sqrt{6}^n -1
$
Nash equilibria, because we can also use the zero matrix as 
$2 \times 2$ block within a strategy
as long as not all the blocks are the zero matrix. Outside 
of the $2 \times 2$-diagonal blocks, the entries of these Nash equilibria
are zero. Those entries can be chosen arbitrarily as long as
the positive semidefiniteness constraint on the strategy is satisfied,
without losing the equilibrium property. As a consequence,
the Nash equilibria are not isolated.
It remains to show that the $\sqrt{6}^n -1$ Nash equilibria 
obtained from the Block Lemma belong to distinct connected 
components.

For each Nash equilibrium $(X,Y)$, consider the diagonal 
$2 \times 2$-blocks of the strategies. In each block, we have one of 
the five types from Example~\ref{ex:5nasheq} or the block 
is the zero matrix.
We associate a type 
$p(X,Y) \in \{0, \ldots, 5\}^{n}$ to
each Nash equilibria which gives the type in each of the $n/2$
blocks of $X$ and in each of the $n/2$ blocks of $Y$.

Any two of the $\sqrt{6}^n-1$ Nash equilibria coming from the
Block Lemma have distinct types. By restricting to the diagonal
blocks, this implies that the $6^{n/2}-1$ Nash equilibria
belong to distinct connected components.
\end{proof}

Asymptotically, we obtain more Nash equilibria than in the Quint
and Shubik construction of bimatrix games \cite{QuiShy-conj-97}
and also more Nash equilibria than in
von Stengel's construction of bimatrix games \cite{von-stengel-1999},
because there the number is $0.949 \cdot 2.414^n/\sqrt{n}$ asymptotically.

Specifically, von Stengel's construction gives a 
$6 \times 6$-bimatrix game with 75 isolated 
Nash equilibria, and so far no $6 \times 6$-bimatrix game with more 
than 75 isolated Nash equilibria is known. Von Stengel also showed an
upper bound of 111 Nash equilibria for a $6 \times 6$-bimatrix games.
In our construction of a semidefinite game on $\sym_6 \times \sym_6$, 
we obtain from Theorem~\ref{th:nashnumber}
the higher number of
$6^{6/2} - 1 = 215$ connected components of Nash equilibria
in the semidefinite game.

\section{Outlook and open questions\label{se:outlook}}

Since the transition from bimatrix games to semidefinite games
leads from polyhedra to spectrahedra, in the geometric description
of Nash equilibria, the questions on the maximal number of Nash
equilibria appear to become even more challenging than in 
the bimatrix situation. Both from the viewpoint of the combinatorics
of Nash equilibria and from the viewpoint of computation, rank
restrictions have been fruitfully exploited in the contexts of bimatrix
games \cite{agm-2021,kannan-theobald-2010} and 
separable games \cite{sop-2008}. It would be interesting to
study the exploitation 
of low-rank structures
of semidefinite games in the case of payoff tensors with suitable
conditions on a low tensor rank.

Concerning the reduction from semidefinite programs to
semidefinite games, it is a natural question whether the
handlings of the exceptional cases by Adler \cite{adler-2013} and 
von Stengel \cite{von-stengel-2022} can be generalized
to the semidefinite case.

We also briefly mention questions of the semidefinite generalizations
of more general classes of (bimatrix) games.
A \emph{polymatrix game} (or \emph{network game})~\cite{ccdp-polymatrix-16} is defined by a graph.
The nodes are the players and each edge corresponds to a two-player
zero-sum matrix game. Every player chooses one set of strategies 
and 
she uses it with all the games that she is involved with.
The game has an equilibrium that we can compute efficiently using 
linear programming.
Shapley's \emph{stochastic games} are 
 two-player zero-sum games of potentially infinite duration.
 Roughly speaking, the game takes place on a complete graph, 
 the nodes of which correspond 
 to zero-sum matrix games. 
 Two players, starting from an arbitrary node (position), 
at each stage of the game, play a 
zero-sum matrix game and receive payoffs. Then, with a non-zero probability the game either stops 
or they players move to another node and play again. Because the stopping probabilities are non-zero at each position, the game terminates. Shapley proved \cite{shapley_stochastic_1953} that this game has an equilibrium; there is also an algorithm to compute it \cite{hansen2011exact}, see also \cite{oliu2021new}.
It remains a future task to study the generalizations of polymatrix
 and stochastic games when the underlying 
bimatrix games are replaced by semidefinite games. For 
semidefinite polymatrix games this has recently been initiated 
in~\cite{itt-2023}.

\subsubsection*{Acknowledgements}
The authors are thankful to 
Giorgos Christodoulou and Antonios Varvitsiotis
as well as to the anonymous referees
for their comments and suggestions.

\bibliographystyle{abbrv}

\bibliography{bib-semidefinite-games}

\end{document}